\documentclass{amsart}

\usepackage{amsthm}
\usepackage{enumerate}
\usepackage{amsmath}
\usepackage{amssymb}
\usepackage{amsfonts}

\newcommand{\rp}{\mathbb{R}}
\newcommand{\cp}{\mathbb{C}}
\newcommand{\qp}{\mathbb{Q}}
\newcommand{\pp}{\mathbb{P}}
\newcommand{\np}{\mathbb{N}}

\newcommand{\zp}{\mathbb{Z}}

\newcommand{\ep}{\epsilon}

\newcommand{\wt}{\widetilde}

\DeclareMathOperator{\Spec}{Spec}
\DeclareMathOperator{\res}{res}
\DeclareMathOperator{\Pic}{Pic}
\DeclareMathOperator{\Card}{Card}
\DeclareMathOperator{\Vol}{Vol}
\DeclareMathOperator{\Div}{div}
\DeclareMathOperator{\Tr}{Tr}
\DeclareMathOperator{\mult}{mult}
\DeclareMathOperator{\Conv}{Conv}

\newtheorem{lemm}{Lemma}
\newtheorem{coro}{Corollary}
\newtheorem{prop}{Proposition}

\newtheorem{theo}{Theorem}
\newtheorem{rema}{Remark}

\author{Martin Weimann}
\address{Departament Algebra i Geometria, Facultat de Matem\`{a}tiques, Universitat Barcelona
Gran via 585, 08007 Barcelona.}
\email{weimann23@gmail.com}

\title[Osculation...]{Algebraic Osculation And Factorization Of Sparse Polynomials}

\begin{document}


\begin{abstract}
We prove a theorem on algebraic osculation and we apply our result to the Computer Algebra problem of polynomial factorization. 
We consider $X$ a smooth completion of $\cp^2$ and $D$ an effective divisor with support $\partial X=X\setminus \cp^2$. Our main result gives explicit conditions equivalent to that a given Cartier divisor on the subscheme $(|D|,\mathcal{O}_D)$ extends to $X$. These osculation criterions are expressed with residues. We derive from this result a toric Hensel lifting which permits to compute the absolute factorization of a bivariate polynomial by taking in account the geometry of its Newton polytope.  In particular, we reduce the number of possible recombinations when compared to the Galligo-Rupprecht algorithm.  
\end{abstract}

\maketitle


\section{Introduction}

This article is originally motivated by the wellknown Computer Algebra problem of polynomial factorization. We introduce here a new method to compute the absolute factorization of sparse bivariate polynomials, which is based on criterions for algebraic osculation in toric varieties.
 
Since the eightees, several deterministic or probabilistic algorithms have been obtained to compute the irreducible absolute factorization of dense bivariate polynomials defined over a number field $K\subset \cp$. We refer the reader to \cite{GG:gnus} and  \cite{CL:gnus} for  a large overview of the subject.  
In many cases, these algorithms are based on a \textit{Lifting and Recombination} scheme - refered here as LR-algorithms - which detects the irreducible absolute factors of a polynomial $f\in K[t_1,t_2]$ in its formal decomposition in $\bar{K}[[t_1]][t_2]$.
Although this approach \textit{a priori} necessites an exponential number of possible recombinations, people succeded in the last decade to develop LR-algorithms running now in a quasi-optimal polynomial complexity (see for instance \cite{Ch:gnus}, \cite{Gao:gnus}, \cite{CL:gnus} and the reference within). 

In general, a LR-algorithm first performs a generic affine change of coordinates. When $f$ has many zero coefficients in its dense degree $d$ monomial expansion - we say that $f$ is \textit{sparse} - this step loses crucial information. In this article, we propose a new method which avoids this generical choice of coordinates. Roughly speaking, we obtain a toric version of the Hensel lifting process (in the spirit of \cite{Gao:gnus})  which detects and computes the irreducible absolute factors of $f$ by taking in account the geometry of its Newton polytope $N_f$.  This permits in particular to reduce the number of possible recombinations when compared to the Galligo-Rupprecht algorithm \cite{GR:gnus}. 

Let us present our main results.
\vskip2mm
\noindent
\textbf{Algebraic Osculation.} A natural way to take advantage of the Newton polytope information is to embed the complex curve of $f$ in a smooth toric compactification $X$ of the complex plane. For a well chosen $X$, we can recover $N_f$ (up to translation) from the Picard class of the Zariski closure $C\subset X$ of the affine curve of $f$ and we want to use this information. The boundary $\partial X=X\setminus \cp^2$ of $X$ is a normal crossing divisor whose Picard group satisfies
$$
\Pic(\partial X)\simeq \Pic(X).
$$
Thus, it's natural to pay attention to the restriction of $C$ to some effective Cartier divisor $D$ (more precisely, to the subscheme $(|D|,\mathcal{O}_D)$) with support $|\partial X|$. In order to detect the irreducible components of $C$, we need to find conditions for when a Cartier divisor on $D$  extends to $X$.  This is achieved in our main 

\vskip5mm
\noindent
{\bf Theorem 1.} \textit{Let $X$ be a smooth projective compactification of $\cp^2$, whose boundary
$\partial X=X\setminus \cp^2$ is a normal crossing divisor. Let $D$ be an effective Cartier divisor with support $|\partial X|$ and let $\Omega_X^2(D)$ be the sheaf of meromorphic $2$-forms with polar locus bounded by $D$. 
\vskip0mm
There exists a pairing $\langle \cdot,\cdot \rangle$ between the group of Cartier divisors on $D$ and the vector space $H^0(X,\Omega_X^2(D))$ with the property that a Cartier divisor $\gamma$ on $D$ extends to a Cartier divisor $E$ on $X$ if and only if
$$
 \langle \gamma, \Psi \rangle = 0 \quad\forall\,\,\Psi \in H^0(X,\Omega_X^2(D)). 
$$
The divisor $E$ is unique up to rational equivalence.}
\vskip5mm
\noindent

Not surprisingly, we'll construct such a pairing by using Grothendieck residues (see for instance \cite{Grif2:gnus} where the authors study the interplay between residues and zero-dimensional subschemes extension). 
When $X$ is a toric surface we obtain an explicit formula for $\langle \gamma, \Psi \rangle$, generalizing a theorem of Wood \cite{Wood2:gnus}. To prove Theorem $1$, we compute the cohomological obstruction to extend line bundles from $D$ to $X$ and then we use the Dolbeault $\bar{\partial}$-resolution and residue currents to make explicit the conditions.
\vskip3mm
\noindent
\textbf{Application to polynomial factorization.} If two algebraic curves of fixed degree osculate each other with sufficiently big contact orders on some finite subset, they necessarily have a common component. This basic observation permits to derive from Theorem $1$ an algorithm which computes the absolute factorizion of a bivariate polynomial $f$. The polynomial $f$ is assumed to be defined over a subsfield $K\subset \cp$ and we look for its irreducible decomposition over $\cp$. 

Under the assumption $f(0)\ne 0$, we can associate to $f$ a smooth toric completion $X$ of $\cp^2$ whose boundary 
$$
\partial X=D_1+\cdots+D_r
$$ 
is a normal crossing toric divisor and such that the curve $C\subset X$ of $f$ does not contain any torus fixed points of $X$. A Minkowski sum decomposition 
$$
N_f=P+Q
$$
of the Newton polytope of $f$ corresponds to a line bundle decomposition
$$
\mathcal{O}_X(C)\simeq \mathcal{L}_P\otimes \mathcal{L}_Q,
$$
where $\mathcal{L}_P$ and $\mathcal{L}_Q$ are both globally generated with at least one non trivial global section. 
\vskip0mm
\noindent

We prove the following result (Subsection $3.2$)
\vskip5mm
\noindent
{\bf Theorem 2.} \textit{There exists a unique effective divisor $D$ with support $|\partial X|$ and rationally equivalent to $C+\partial X$. Let $\gamma$ be the restriction of $C$ to $D$ and suppose that $N_f=P+Q$. There exists an absolute factor $q$ of $f$ with Newton polytope $Q$ 
if and only if there exists $0\le \gamma'\le \gamma$ such that 
$$
\deg(\gamma'\cdot D_i)=\deg \mathcal{L}_{Q|D_i},\quad i=1,\ldots,r
$$
and so that the osculation conditions hold for the pair $(D,\gamma')$. The factor $q$ is computed from $\gamma'$ by solving a sparse linear system of $2 \Vol(Q)+\deg(\gamma'\cdot \partial X)$ equations and $\Card(Q\cap \zp^2)$ unknowns.}
\vskip5mm
\noindent

When the facet polynomials of $f$ are square free over $\bar{K}$, we can derive from Theorem $2$ a vanishing-sum LR-algorithm (Subsection $3.4$). It first computes the Newton polytope decomposition 
$$
N_f=Q_1+\cdots+Q_s
$$ 
associated to the absolute decomposition of $f$. Then it computes the associated irreducible absolute factorization 
$$
f=q_1\times\cdots\times q_s
$$
with floatting calculous and with a given precision. The numerical part of our algorithm reduces to the absolute factorization of the univariate exterior facet polynomials and the algorithmic complexity depends now on the Newton polytope $N_f$ instead of the degree $d=\deg(f)$. In particular, we fully profit from the combinatoric restrictions imposed by Ostrowski's conditions $N_{pq}=N_p+N_q$ (see \cite{O:gnus}). This permits to reduce the number of possible recombinations when compared to the Galligo-Rupprecht algorithm \cite{GR:gnus}.

Finally, let us mention that Theorem $1$ concerns also non toric $\cp^2$-completions. In theory, this permits to exploit the information given by the \textit{non toric} singularities of $C$ along the boundary of $X$ when $f$ has non reduced facet polynomials (see Subsection $3.6$).

A formal study of algorithmic complexity, as well as questions of using non toric singularities information will be explored in a further work. 
\vskip4mm
\noindent
{\bf Related results.} Our method is inspired by an algorithm presented in \cite{EGW:gnus} that uses generical toric interpolation criterions \cite{W:gnus} in an open neighborhood of $\partial X$ (see Subsection $3.5$). By using combinatorics tools, the authors in \cite{Gao:gnus} obtain a comparable Hensel lifting process which there too takes in account the geometry of the Newton polytope. Finally, let us mention \cite{AKS:gnus}, where the authors reduce the factorization of a sparse polynomial to smaller dense factorizations. 
\vskip4mm
\noindent
{\bf Organization.} The article is organized as follow. Section $2$ is devoted to algebraic osculation. We introduce the problem in Subsection $2.1$ and we construct the residue pairing in Subsection $2.2$. We enounce precisely Theorem $1$ in Subsection $2.3$ and we give the proof in Subsection $2.4$. We give an explicit formula for the osculation criterions when $X$ is a toric surface in Subsection $2.5$. 
In Section $3$, we pay attention to polynomial factorization. We prove Theorem $2$ in Subsections $3.2$ and $3.3$ and we develop the sketch of a sparse polynomial factorization algorithm in Subsection $3.4$. We compare the underlying algortihmic complexity with related results in Subsections $3.5$ and discuss non toric information in Subsection $3.6$. We conclude in the last Subsection $3.7$.

\vskip4mm
\noindent
{\bf Aknowledgment.} We would like to thanks Michel Brion, St\'ephane Druel, Jos\'e Ignacio Burgos and Martin Sombra for their disponibility and helpfull comments. We thanks Mohamed Elkadi and Andr\'e Galligo who suggested us to pay attention to the interplay between toric geometry and sparse polynomial factorization. 

\section{Algebraic osculation}

\subsection{Notations and motivation} In all the sequel, $(X,\mathcal{O}_X)$ designs a smooth projective surface where   
$$
X=X_0\sqcup |\partial X|
$$
is the disjoint union of an affine surface $X_0\simeq \cp^2$ with the support of a simple normal crossing divisor 
$$
\partial X=D_1+\cdots+D_r.
$$
We say that $X$ is a completion of the complex plane with boundary $\partial X$. 
\vskip2mm
\noindent

An \textit{osculation data} on the boundary of $X$ is a pair 
$
(D,\gamma)
$ where
$$
D=(k_1+1)D_1+\cdots +(k_r+1)D_r
$$
is an effective divisor with support $|D|=|\partial X|$ and $\gamma$ is a Cartier divisor on the subscheme $(|D|,\mathcal{O}_D)$. By abuse of language, we will write $D=(|D|,\mathcal{O}_D)$.

An \textit{osculating divisor} for $(D,\gamma)$ is a Cartier divisor $E$ on $X$ which restricts to $\gamma$ on $D$. That is
$$
i^*(E)=\gamma,
$$
where $i:D\rightarrow X$ is the inclusion map. In other words, we are looking for a divisor $E$ with prescribed restriction to the $k_i^{th}$-infinitesimal neighborhood of $D_i$. In general, such an osculating divisor does not exist and we are interested here to determine the necessary extra conditions.  

\vskip2mm
\noindent

We say that $\gamma$ has support $|\Gamma|$, where $\Gamma$ designs the zero-cycle $\gamma\cdot \partial X$. Thus, $\gamma$ can be uniquely written as a finite sum
$$
\gamma=\sum_{p\in|\Gamma|} \gamma_p,
$$
each $\gamma_p$ being the restriction to $D$ of a germ of an analytic \textit{cycle} of $X$
$$\wt{\gamma}_p=\Div(f_p)$$ 
at $p$. When $\wt{\gamma}_p$ is smooth and intersects transversally $\partial X$, a curve restricts to $\gamma_p$ at $p$ if and only if it has contact order at least $k_p$ with $\wt{\gamma}_p$, where $k_p+1$ is the multiplicity of $D$ at $p$. This observation motivates the terminology of algebraic osculation.


\subsection{Residues} It's a classical fact that Grothendieck residues play a crucial role in osculation and interpolation problems. Let us mention for instance \cite{Grif3:gnus}, \cite{HP:gnus}, \cite{W2:gnus} and \cite{W:gnus} for interpolation results and \cite{Grif2:gnus} for the interplay between residues and subscheme extensions. Not surprisingly, residues will appear here too. 

Let $\Omega^2_X$ be the canonical bundle of $X$. We identify the line bundle $\Omega_X^2(D)$ with the sheaf of meromorphic forms with polar locus bounded by $D$. Thus, any global section $\Psi$ of $\Omega_X^2(D)$ restricts to a closed $2$-form on $X_0$. Since $X_0\simeq \cp^2$ is simply connected, there exists a rational $1$-form $\psi$ on $X$ such that 
$$
d\psi_{|X_0} = \Psi_{|X_0}.
$$

For $p\in |\Gamma|$, we denote by $\psi_p$ the germ of $\psi$ in the chosen local coordinates. Let $h_p$ be a local equation of $D$ at $p$. Thus $h_p\psi_p$ is holomorphic at $p$. Suppose for a while that $f_p$ is holomorphic and irreducible. Then, following \cite{Grif:gnus}, we define the Grothendieck residue at $p$ of the germ of meromorphic $2$-form $df_p\land \psi_p/f_p$ as
\begin{eqnarray}
\res_p\,\Big[\frac{df_p}{f_p}\land \psi_p\Big]:=\lim_{\underline{\ep}\rightarrow 0} \frac{1}{(2i\pi)^2} \int_{u_p(\ep)} \frac{df_p}{f_p}\land \psi_p,
\end{eqnarray}
where $u_p(\ep)=\{x \,\,{\rm close\,\, to}\, p,\,\,\,|f_p(x)|=\ep_1, |h_p|=\ep_2\}$. 

This definition does not depend on the choice of local coordinates \cite{Grif:gnus}. By Stokes Theorem, it only depends on $d\psi=\Psi$. Moreover, the local duality Theorem \cite{Grif:gnus} implies that $(1)$ only depends on $f_p$ modulo $(h_p)$. That is, $(1)$ depends on $\gamma_p$ and not on the chosen lifting $\wt{\gamma}_p=\{f_p=0\}$. By linearity, we can extend $(1)$ to any germ of analytic cycle $\wt{\gamma}_p=\Div(f_p)$ and it follows finally that $(1)$ defines a bilinear operator 
$$
\langle \gamma, \Psi \rangle_p:= \res_p\,\Big[\frac{df_p}{f_p}\land \psi_p\Big]
$$
between the group of Cartier divisor of $D$ and the $\cp$-vector space $H^0(X, \Omega_X^2(D))$. We refer to the proof of Theorem $1$ (Subsection $2.4$) for more details. 

\vskip4mm

\subsection{Criterions for algebraic osculation}

We keep previous notations. Our main result is the following 
\vskip4mm
\begin{theo}
Let $X$ be a smooth projective completion of $\cp^2$ with a normal crossing boundary $\partial X$ and consider an osculating data $(D,\gamma)$ on $\partial X$. 

1. There exists a Cartier divisor $E$ on $X$ which restricts to $\gamma$ on $D$ if and only if
\begin{eqnarray}
\sum_{p\in |\Gamma|}  \langle \gamma, \Psi \rangle_p = 0  \,\,\,for\,\,\, all\,\,\, \Psi \in H^0(X,\Omega_X^2(D)).
\end{eqnarray}
The divisor $E$ is unique up to rational equivalence. 

2. If moreover $\gamma$ is effective and
$$
H^1(X,\mathcal{O}_X(E-D))=0,
$$
then there exists an effective divisor of $X$ which restricts to $\gamma$ on $D$.
\end{theo}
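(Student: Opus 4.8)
\emph{Overview and Step 1 (a cohomological obstruction).} The plan is to convert the extension problem into the vanishing of a single cohomology class, recognize that class under Serre duality as a linear form on $H^0(X,\Omega_X^2(D))$, and then use the Dolbeault resolution together with residue currents to identify this form with $\Psi\mapsto\sum_{p\in|\Gamma|}\langle\gamma,\Psi\rangle_p$; Part~2 is then a short section‑lifting argument. Concretely, since $X_0\simeq\cp^2$ is Stein with $\Pic(X_0)=0$ and $H^{>0}(X_0,\mathcal O_{X_0})=0$, the extension question is governed entirely by what happens along $|D|$. Comparing the (analytic) exponential sequences on $X$ and on the scheme $D$ yields $1\to 1+\mathcal O_X(-D)\to\mathcal O_X^*\to i_*\mathcal O_D^*\to 1$, where $\log$ provides an isomorphism $1+\mathcal O_X(-D)\simeq\mathcal O_X(-D)$ along $|D|$. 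Using $H^1(X,\mathcal O_X)=H^2(X,\mathcal O_X)=0$ (as $X$ is rational) and chasing cohomology, one finds that the cokernel of $i^*\colon\Pic(X)\to\Pic(D)$ embeds into $H^2(X,\mathcal O_X(-D))$; since a Cartier divisor on $D$ extends as a divisor on $X$ exactly when its line‑bundle class extends (the remaining freedom of rational functions on $X$ suffices to adjust the class to $\gamma$ on the nose), this produces a canonical obstruction $\mathrm{ob}(\gamma)\in H^2(X,\mathcal O_X(-D))$ which vanishes if and only if $\gamma$ extends to a Cartier divisor $E$ on $X$. Uniqueness of $E$ up to rational equivalence is immediate: $i^*\colon\Pic(X)\to\Pic(D)$ is injective, because the restriction $\Pic(X)\to\Pic(\partial X)$ is an isomorphism and $\partial X\subset D$.

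\emph{Steps 2--3 (Serre duality and the explicit pairing).} By Serre duality on the smooth projective surface $X$ one has $H^2(X,\mathcal O_X(-D))^\vee\simeq H^0(X,\Omega_X^2\otimes\mathcal O_X(D))=H^0(X,\Omega_X^2(D))$, so $\mathrm{ob}(\gamma)=0$ iff $\langle\mathrm{ob}(\gamma),\Psi\rangle=0$ for every $\Psi$; it remains — and this is the crux — to identify this pairing with $\sum_{p}\langle\gamma,\Psi\rangle_p$. I would pass to the $\bar\partial$‑Dolbeault model, representing $\mathrm{ob}(\gamma)$ by a $\bar\partial$‑closed $(0,2)$‑current with values in $\mathcal O_X(-D)$ obtained by gluing along $|\Gamma|$ the residue currents attached to the local equations $f_p$ of $\gamma$, so that $\langle\mathrm{ob}(\gamma),\Psi\rangle$ becomes $\int_X(\text{that current})\wedge\Psi$. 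Writing $\Psi_{|X_0}=d\psi$ for a rational $1$‑form $\psi$ on $X$ (possible since $X_0$ is simply connected) and applying Stokes around the polar locus localizes the integral at the points of $|\Gamma|$, where it collapses to the Grothendieck residue $\res_p[\frac{df_p}{f_p}\wedge\psi_p]=\langle\gamma,\Psi\rangle_p$. Carrying this out simultaneously supplies the well‑definedness claims announced in Subsection~2.2: independence of the choice of $\psi$ (Stokes, as only $d\psi$ enters), of the local coordinates, and of the lifting $\wt\gamma_p$ of $\gamma_p$ — the last by the local duality theorem, which makes the residue depend only on $f_p$ modulo the local equation $h_p$ of $D$. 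The chain $\mathrm{ob}(\gamma)=0\iff\langle\mathrm{ob}(\gamma),\Psi\rangle=0\ \forall\Psi\iff\sum_p\langle\gamma,\Psi\rangle_p=0\ \forall\Psi$ then proves Part~1.

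\emph{Step 4 (Part 2).} Assume $\gamma$ effective and $H^1(X,\mathcal O_X(E-D))=0$, with $E$ the divisor furnished by Part~1. Effectivity gives the tautological section $1_\gamma\in H^0(D,\mathcal O_D(\gamma))$ with $\Div(1_\gamma)=\gamma$; under a fixed isomorphism $\mathcal O_D(\gamma)\simeq\mathcal O_X(E)_{|D}$ it becomes $\bar s\in H^0(D,\mathcal O_X(E)_{|D})$. From the restriction sequence $0\to\mathcal O_X(E-D)\to\mathcal O_X(E)\to\mathcal O_X(E)_{|D}\to 0$ (using $\mathcal O_X(-D)=\mathcal I_D$) and the hypothesis $H^1(X,\mathcal O_X(E-D))=0$, the map $H^0(X,\mathcal O_X(E))\to H^0(D,\mathcal O_X(E)_{|D})$ is surjective, so $\bar s$ lifts to $s\in H^0(X,\mathcal O_X(E))$. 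Then $\Div(s)$ is effective, and since $s_{|D}=\bar s$ corresponds to the non‑zerodivisor $1_\gamma$, the section $s$ vanishes identically on no component $D_i$ of $D$; hence $i^*\Div(s)=\Div(s_{|D})=\gamma$. Thus $\Div(s)$ is the desired effective osculating divisor (and $\Div(s)\sim E$).

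\emph{Main obstacle.} The substantial work is Step~3: constructing a usable representative of $\mathrm{ob}(\gamma)$ and running the residue‑current computation that identifies the Serre pairing with $\sum_p\langle\gamma,\Psi\rangle_p$, together with the local‑duality checks that make the individual residues well defined. A secondary difficulty, upstream in Step~1, is pinning the obstruction down precisely: because $D$ is non‑reduced the naive sheaf of meromorphic functions ``restricting to a unit on $D$'' is strictly larger than $1+\mathcal O_X(-D)$, and one must check that the obstruction genuinely only sees $H^2(X,\mathcal O_X(-D))$.
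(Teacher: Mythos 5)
Your overall architecture coincides with the paper's: reduce to a cohomological obstruction in $H^2(X,\mathcal O_X(-D))$, dualize by Serre duality against $H^0(X,\Omega_X^2(D))$, identify the pairing via the Dolbeault resolution and residue currents with $\psi$ a rational primitive of $\Psi$ on $X_0$, and prove Part~2 by the restriction sequence twisted by $\mathcal O_X(E)$. Part~2 and the uniqueness statement are fine as you give them. But the two places you yourself flag as difficulties are genuine gaps, and in both cases the fix is not a routine verification. First, your Step~1 route through $1\to(1+\mathcal O_X(-D))\to\mathcal O_X^*\to i_*\mathcal O_D^*\to 1$ does not land the obstruction in $H^2(X,\mathcal O_X(-D))$: away from $|D|$ the kernel sheaf is all of $\mathcal O_X^*$, and $\log$ gives no isomorphism with $\mathcal O_X(-D)$ there. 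The paper avoids this entirely by comparing the \emph{exponential} sequences on $X$ and on the non-reduced scheme $D$, proving $\Pic(D)=\Pic(X)\oplus H^1(D,\mathcal O_D)$ (the topological input being $H^2(X,\zp)\simeq H^2(|D|,\zp)$, from $H_c^*(X_0)=0$ in the relevant degrees), and only then transporting the obstruction $\beta\in H^1(D,\mathcal O_D)$ to $H^2(X,\mathcal O_X(-D))$ by the coboundary of $0\to\mathcal O_X(-D)\to\mathcal O_X\to\mathcal O_D\to 0$, which is an isomorphism because $X$ is rational.

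Second, and more seriously, in Step~3 "gluing along $|\Gamma|$ the residue currents attached to the local equations $f_p$" does not produce a representative of the obstruction class: the $f_p$ are local data of $\gamma$, whereas the class to be represented is the Čech class of $\mathcal L_0=\mathcal O_D(\gamma)\otimes\wt{\mathcal L}^{-1}_{|D}$, and the whole content of the theorem is precisely that this class is computed by those residues. The paper's construction requires two auxiliary objects (its Lemma~2): an extension of $\gamma$ to a Cartier divisor $\wt\gamma$ on a tubular neighborhood $B$ of $|D|$, and a reference divisor $E$ on $X$ with $\mathcal O_X(E)\simeq\wt{\mathcal L}$ restricting to $\Gamma$ on $D_{red}$ (obtained by twisting with a very ample divisor and Serre vanishing). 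The cocycle of $\mathcal L_0$ is then realized by the meromorphic section $m$ with $\Div(m)=\wt\gamma-E_{|B}$, the Dolbeault representative is built from truncated logarithms $S_U=\sum_{n\le n_0}\frac1n[(1-m_U)^n]$ (truncation justified by the duality theorem for residue currents), and only after Stokes and the Lelong--Poincaré equation does $\Tr(\zeta_\Psi)$ collapse to $\sum_p\res_p[\frac{df_p}{f_p}\wedge\psi_p]$. The same Serre-vanishing twist is what justifies your parenthetical claim that an extension of the line bundle can be upgraded to a Cartier divisor restricting to $\gamma$ on the nose; as stated, that step is also unproved.
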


\vskip4mm
\noindent

The necessity of $(2)$ follows from the Residue Theorem \cite{Grif:gnus}. The difficult part consists to show that these conditions are also sufficient. The proof will be given in the next Subsection $2.4$. Let us first illustrate  Theorem $1$ on a simple example.
\vskip2mm
\noindent
{\it Example 1 (the Reiss relation).} Let $X=\pp^2$ and consider a finite collection of $d>0$ \textit{smooth} analytic germs $\wt{\gamma}_p$ transversal to a line $L\subset \pp^2$. Suppose that we look for an algebraic curve $C\subset \pp^2$ of degree $d$ which osculates each germ with a contact order $\ge 2$.  This problem leads to the osculation data $(D,\gamma)$, where $D=3L$ and $\gamma$ is the restriction to $D$ of $\sum_{p\in |\Gamma|}\wt{\gamma}_p$. 
There is an isomorphism
$$
H^0(\pp^2,\Omega_{\pp^2}^2(3L))\simeq H^0(\pp^2,\mathcal{O}_{\pp^2})\simeq \cp
$$
and we can check that a generator is given by the form $\Psi$ whose restriction to $\cp^2=\pp^2\setminus L$ is equal to
$
\Psi_{|\cp^2}=dt_1\land dt_2
$
in affine coordinates $(t_1,t_2)$. Thus we can choose $\psi_{|\cp^2}=t_1dt_2$. Letting $t_1=T_1/T_0$ and $t_2=T_2/T_0$ we obtain  
$$
\psi=\frac{T_1(T_0dT_2-T_2dT_0)}{T_0^3}
$$
in the $\pp^2$ homogeneous coordinates $[T_0:T_1:T_2]$. Up to an affine change of coordinates, we can suppose that $\gamma$ is supported in the affine chart $T_2 \ne 0$. In the new affine coordinates $x=T_0/T_2$ and $y=T_1/T_2$, the line $L$ has local equation $x=0$ and $\psi=-ydx/x^3$. Moreover, we can choose a Weierstrass equation 
$$
\wt{\gamma}_p=y-\phi_p(x)
$$
for the smooth germ $\wt{\gamma}_p$, where $\phi_p\in\cp\{x\}$. By Cauchy formula, we obtain
$$
\res_p \Big[\frac{ydx\land d(y-\phi_p)}{x^3(y-\phi_p)}\Big]= \res_0 \Big[\phi_p(x) \frac{dx}{x^3}\Big]= \frac{1}{2}\phi_p''(0),
$$
where $\res_0$ is a univariate residue and $\phi_p''$ is the second derivative of $\phi_p$. Finally, $(2)$ is here equivalent to that
\begin{eqnarray}
\sum_{p\in |\Gamma|} \phi_p''(0)=0.
\end{eqnarray}
For degree reasons, any osculating divisor is rationally equivalent to $dL$. Since 
$
H^1(\mathcal{O}_{\pp^2}(d-3))=0,
$
the relation $(3)$ is finally equivalent to that there exists an osculating \textit{curve} $C$ for $(D,\gamma)$. 

It's easy to see directly the necessity of $(3)$. An osculating curve is given by a degree $d$ polynomial $C=\{f(x,y)=0\}$ that can be factorized
$$
f(x,y)=\prod_{p\in|\Gamma|} (y-u_p(x))
$$
in $\cp\{x\}[y]$. Since $deg(f)=d$, the sum $\sum_{p\in |\Gamma|}u_p(x)$ is a degree $1$ polynomial and the relation
$$
\sum_{p\in |\Gamma|} u_p''(0)=0
$$
holds. If $C$ osculates $\wt{\gamma}_p$ with contact oder $2$, then $\phi_p$ and $u_p$ have the same Taylor expansion up to order $2$, which directly shows necessity of $(3)$. When we express the second derivative of $u_p$ in terms of the partial derivative of $f$, we recover the classical Reiss relation \cite{Green:gnus}. This result is obtained by Griffiths-Harris in \cite{Grif:gnus}, Chapter $6$. 

\subsection{Proof of Theorem $1$}

The Cartier divisor $\gamma$ corresponds to a line bundle $\mathcal{L}\in \Pic(D)$ over $D$ together with a global meromorphic section $f$. An osculating divisor for $(D,\gamma)$ corresponds to a  line bundle 
$\wt{\mathcal{L}}\in \Pic(X)$ together with a global meromorphic section $\wt{f}$ which restrict respectively to $\mathcal{L}$ and $f$ on $D$.

\vskip2mm

The following lemma gives the cohomological obstruction for the extension of $\mathcal{L}$. All sheaves are considered here as analytic sheaves and any sheaf on a subscheme $Y\subset X$ is implicitly considered as a sheaf on $X$ by zero extension.

\begin{lemm}
There is a decomposition of $\Pic(D)$ in a direct sum
$$
\Pic(D)=\Pic(X)\oplus H^1(D,\mathcal{O}_D).
$$
\end{lemm}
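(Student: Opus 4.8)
The plan is to compute $\Pic(D)=H^1(D,\mathcal{O}_D^*)$ from the exponential sequence, comparing $D$ first with its reduced subscheme $\partial X=D_{\mathrm{red}}$ and then with $X$, and to extract the splitting from the restriction isomorphism $\Pic(X)\simeq\Pic(\partial X)$ recalled in the introduction. Let $\mathcal{N}\subset\mathcal{O}_D$ be the nilradical, so that $0\to\mathcal{N}\to\mathcal{O}_D\to\mathcal{O}_{\partial X}\to0$ is exact on $|\partial X|$ and $\mathcal{N}$ is uniformly nilpotent, say $\mathcal{N}^{M+1}=0$ with $M=\max_i k_i$. Passing to units gives an exact sequence of sheaves of abelian groups $0\to 1+\mathcal{N}\to\mathcal{O}_D^*\to\mathcal{O}_{\partial X}^*\to0$, and since we work over $\cp$ and $\mathcal{N}$ is nilpotent, the truncated logarithm and exponential are mutually inverse isomorphisms $1+\mathcal{N}\simeq\mathcal{N}$ of abelian sheaves.

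Taking cohomology in the multiplicative sequence, I would invoke two harmless facts: $H^2(|\partial X|,\mathcal{N})=0$ since $\mathcal{N}$ is coherent on the one-dimensional space $|\partial X|$, and $H^0(\mathcal{O}_D^*)\to H^0(\mathcal{O}_{\partial X}^*)$ is onto since both groups contain the constants $\cp^*$ onto which they restrict --- here using that $|\partial X|$ is connected, so that $H^0(\mathcal{O}_{\partial X})=\cp$. The long exact sequence then reduces to
$$
0\longrightarrow H^1(|\partial X|,\mathcal{N})\longrightarrow\Pic(D)\longrightarrow\Pic(\partial X)\longrightarrow0.
$$
To identify the kernel with $H^1(D,\mathcal{O}_D)$, I would take cohomology of $0\to\mathcal{N}\to\mathcal{O}_D\to\mathcal{O}_{\partial X}\to0$: using once more that $H^0(\mathcal{O}_D)\to H^0(\mathcal{O}_{\partial X})=\cp$ is onto and that $H^2$ vanishes on the curve $|\partial X|$, this produces an exact sequence $0\to H^1(|\partial X|,\mathcal{N})\to H^1(D,\mathcal{O}_D)\to H^1(\partial X,\mathcal{O}_{\partial X})\to0$.

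The essential geometric input is now $H^1(\partial X,\mathcal{O}_{\partial X})=0$ --- equivalently, the arithmetic genus of the boundary is zero, i.e. its dual graph is a tree. I would deduce it from the restriction isomorphism $\Pic(X)\simeq\Pic(\partial X)$: the group $\Pic(X)$ is finitely generated (it is the N\'eron--Severi group of a rational surface), so the image of the natural map $H^1(\partial X,\mathcal{O}_{\partial X})\to\Pic(\partial X)$ coming from the exponential sequence of $\partial X$ is a divisible subgroup of a finitely generated group, hence trivial; therefore the finitely generated group $H^1(\partial X,\zp)$ surjects onto the $\cp$-vector space $H^1(\partial X,\mathcal{O}_{\partial X})$, which forces $H^1(\partial X,\mathcal{O}_{\partial X})=0$. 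Hence $H^1(|\partial X|,\mathcal{N})\simeq H^1(D,\mathcal{O}_D)$, and the sequence above becomes
$$
0\longrightarrow H^1(D,\mathcal{O}_D)\longrightarrow\Pic(D)\longrightarrow\Pic(\partial X)\longrightarrow0.
$$

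It remains to split this. Since restriction of line bundles is functorial along $\partial X\hookrightarrow D\hookrightarrow X$, the composite $\Pic(X)\to\Pic(D)\to\Pic(\partial X)$ is the restriction map $\Pic(X)\to\Pic(\partial X)$, which is an isomorphism; thus the inclusion $\Pic(X)\hookrightarrow\Pic(D)$ has the retraction $\Pic(D)\to\Pic(\partial X)\xrightarrow{\ \sim\ }\Pic(X)$, and therefore $\Pic(D)=\Pic(X)\oplus H^1(D,\mathcal{O}_D)$. The only step that is not formal bookkeeping is the vanishing $H^1(\partial X,\mathcal{O}_{\partial X})=0$, and I expect that to be the crux: it is precisely where the hypothesis that $X$ completes $\cp^2$ is used, whereas the $\log/\exp$ isomorphism, the vanishing of $H^2$ on a curve, and the surjectivities onto the constants hold for any effective divisor $D$ supported on a normal crossing boundary of an arbitrary smooth projective surface.
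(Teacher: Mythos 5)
Your formal reduction is correct and is a genuinely different route from the paper's: you filter $\mathcal{O}_D^*$ by the nilradical and compare $D$ with $\partial X$, whereas the paper runs the exponential sequence on $D$ itself and compares $H^2(D,\zp_D)$ with $H^2(X,\zp_X)$. The truncated $\log/\exp$ isomorphism $1+\mathcal{N}\simeq\mathcal{N}$, the Grothendieck vanishing of $H^2$ of a coherent sheaf on the curve $|\partial X|$, the surjectivity on $H^0$, and the resulting two short exact sequences $0\to H^1(\mathcal{N})\to \Pic(D)\to\Pic(\partial X)\to 0$ and $0\to H^1(\mathcal{N})\to H^1(\mathcal{O}_D)\to H^1(\mathcal{O}_{\partial X})\to 0$ are all sound, and they reduce the lemma cleanly to the case $D=\partial X$.

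The gap is a circularity in how you discharge that remaining case. The only two non-formal inputs you need --- the vanishing $H^1(\partial X,\mathcal{O}_{\partial X})=0$ and the fact that restriction $\Pic(X)\to\Pic(\partial X)$ is bijective (which gives your retraction) --- are both made to rest on the isomorphism $\Pic(X)\simeq\Pic(\partial X)$ ``recalled in the introduction.'' But the introduction only announces that isomorphism; in the paper it is Corollary $1$, whose proof begins ``By the previous lemma\ldots'': it is obtained by applying the very lemma you are proving with $D=\partial X$ and then showing $H^1(\mathcal{O}_{\partial X})=0$. So as written you assume a statement that the paper deduces from the lemma, and --- as you yourself observe --- the hypothesis $X\setminus|\partial X|\simeq\cp^2$ is never actually used anywhere in your argument, although it must be, since the lemma is false for a general normal crossing divisor on a general rational surface. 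To close the gap you must supply the topological input directly: the sequence $0\to j_!\zp_{X_0}\to\zp_X\to\zp_{|\partial X|}\to 0$ together with the vanishing of $H_c^i(X_0)$ for $i=1,2,3$ (dual to $H_i(\cp^2)$) gives that $H^2(X,\zp)\to H^2(|\partial X|,\zp)$ is an isomorphism and that $H^1(|\partial X|,\zp)=0$; feeding this into the exponential sequences of $X$ and $\partial X$, using $H^i(X,\mathcal{O}_X)=0$ for $i>0$, yields the split injectivity of $\Pic(X)\to\Pic(\partial X)$, and the tree-of-rational-curves argument of Corollary $1$ yields $H^1(\mathcal{O}_{\partial X})=0$. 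Only with these in hand does your reduction deliver the lemma.
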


\begin{proof}

The classical exponential exact sequence exists for any curve of $X$ (reduced or not, see \cite{BHPV:gnus} p.63). We obtain the commutative diagram
\begin{equation}
\begin{array}{ccccccccc}
0 &\longrightarrow& \zp_{X} &\longrightarrow & \mathcal{O}_X &\stackrel{\exp(2i\pi\cdot)}{\longrightarrow} & \mathcal{O}_X^* &\longrightarrow& 0 \\
& &\downarrow&  &\downarrow&  &\downarrow  \\
0 &\longrightarrow& \zp_{D} &\longrightarrow & \mathcal{O}_D &\longrightarrow &\mathcal{O}_D^*&\longrightarrow &0\\
\end{array}
\end{equation}
where $\zp_{D}\subset \mathcal{O}_D$ is the subsheaf of $\zp$-valued functions (so that $\zp_{D}\simeq \zp_{|D|}$) and vertical arrows are surjective restriction maps. Since $X$ is rational, $H^i(X,\mathcal{O}_X)=0$ for $i>0$. Using the associated long exact cohomological sequences, we obtain the commutative diagram
$$
\begin{array}{ccccccccc}
& & 0 & \rightarrow & \Pic(X) & \stackrel{\delta_X}{\rightarrow} & H^2(X,\zp_{X})& \rightarrow & 0    \\
& & \downarrow &  & \downarrow r         &             &   \downarrow r'  &  & \downarrow     \\
H^1(D,\zp_{D})& \rightarrow & H^1(D,\mathcal{O}_D) & \stackrel{e}{\rightarrow} & \Pic(D) & \stackrel{\delta_D}{\rightarrow} & H^2(D,\zp_{D}) & \rightarrow & H^2(D,\mathcal{O}_D)     . \\
\end{array}
$$
Here $r$, $r'$ are restriction maps, $\delta_D$, $\delta_X$ are the coboundary maps corresponding to Chern classes on $D$ and $X$ (see \cite{BHPV:gnus}, Ch. $1$ for the non reduced case) and $e$ is induced by the exponential map. 

We claim that $r'$ is an isomorphism. Let $j:X_0\rightarrow X$ be the inclusion map of $X_0=X\setminus |D|$. The short exact sequence 
\begin{equation}
0\rightarrow j_!(\zp_{X_0})\rightarrow \zp_X \rightarrow \zp_{|D|}\rightarrow 0
\end{equation}
gives rise to the long exact cohomological sequence
$$
H_c^2(X_0)\rightarrow H^2(X,\zp_X) \rightarrow H^2(|D|,\zp_{|D|})\rightarrow H_c^3(X_0)
$$ 
where $H_c^*(X_0)$ is the compact support cohomology of $X_0$. The cohomology $H_c^*(X_0)$ is dual to the singular homology $H_{*}(X_0)$ which vanishes in degree $1,2,3$ since $X_0\simeq \cp^2$, and the claim follows. Thus 
$$
\delta_D\circ r = r' \circ \delta_X 
$$
is an isomorphism, and $\Pic(X)$ is a direct summand of $\Pic(D)$. The exponential cohomological sequence for $X$ is exact in degree $0$ so that $H^1(X,\zp_X)\rightarrow  H^1(X,\mathcal{O}_{X})$ is injective and $H^1(X,\zp_X)=0$. We have just seen that $H^2_c(X_0)=0$, and finally $(5)$ implies $H^1(|D|,\zp_{|D|})=0$. It follows that $ker(\delta_D)\simeq H^1(D,\mathcal{O}_D)$. \end{proof}

\begin{coro}
There is an isomorphism $\Pic(X)\simeq \Pic(\partial X)$.
\end{coro}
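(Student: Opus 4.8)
I would first reduce the statement to a single cohomological vanishing. The reduced divisor $\partial X=D_1+\cdots+D_r$ is itself an effective Cartier divisor with support $|\partial X|$ — it is the osculation datum with $k_1=\cdots=k_r=0$ — so Lemma~1 applies verbatim with $D=\partial X$ and produces a direct sum decomposition
$$
\Pic(\partial X)=\Pic(X)\oplus H^1(\partial X,\mathcal{O}_{\partial X}),
$$
in which the inclusion of the first factor is the restriction homomorphism $r\colon\Pic(X)\to\Pic(\partial X)$. Hence the corollary — with the isomorphism being restriction, and therefore canonical — is equivalent to the assertion $H^1(\partial X,\mathcal{O}_{\partial X})=0$.

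To prove this vanishing I would argue that $\partial X$ is a tree of smooth rational curves. Its dual graph $\Gamma$ is acyclic: the proof of Lemma~1 already establishes $H^1(|\partial X|,\zp)=0$, and the same type of computation with the sequence $0\to j_!(\zp_{X_0})\to\zp_X\to\zp_{|\partial X|}\to 0$ (using $H^1_c(X_0)=0$ because $X_0\simeq\cp^2$ is contractible) shows moreover that $|\partial X|$ is connected. Each component $D_i$ is rational because $\cp^2$ contains no complete curve: a birational morphism from $X$ to a relatively minimal rational surface can only blow up points of $\partial X$ — otherwise $X_0$ would acquire a $(-1)$-curve — so the components of $\partial X$ are either exceptional $\pp^1$'s or birational transforms of the boundary components of a minimal completion of $\cp^2$, which are themselves rational. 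Granting this, the normalization sequence $0\to\mathcal{O}_{\partial X}\to\nu_*\mathcal{O}_{\widetilde{\partial X}}\to\mathcal{O}_Z\to 0$ (with $Z$ the reduced node set and $\nu$ the normalization) gives $h^1(\mathcal{O}_{\partial X})=b_1(\Gamma)+\sum_i g(D_i)=0$. An alternative route, attractive because it involves exactly the sheaf of Theorem~1, starts from $0\to\mathcal{O}_X(-\partial X)\to\mathcal{O}_X\to\mathcal{O}_{\partial X}\to 0$: since $H^i(X,\mathcal{O}_X)=0$ for $i>0$ (as already used for Lemma~1), the long exact sequence yields $H^1(\partial X,\mathcal{O}_{\partial X})\simeq H^2(X,\mathcal{O}_X(-\partial X))\simeq H^0(X,\Omega_X^2(\partial X))^*$ by Serre duality, and one then checks $H^0(X,\Omega_X^2(\partial X))=0$ — a section restricts on $\cp^2$ to a polynomial $2$-form $g\,dt_1\wedge dt_2$ with at worst logarithmic poles along $\partial X$, and pulling back to a minimal completion, where the log canonical bundle $\mathcal{O}_X(K_X+\partial X)$ has no section (it is $\mathcal{O}_{\pp^2}(-2)$ for $X=\pp^2$, and similarly negative for the Hirzebruch completions), forces $g=0$.

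In either approach the only step carrying genuine content beyond the formal invocation of Lemma~1 is the vanishing $H^1(\partial X,\mathcal{O}_{\partial X})=0$, equivalently the fact that the boundary of a smooth normal crossing completion of $\cp^2$ is a tree of rational curves. This rests on the minimal model theory of rational surfaces combined with the observation that $\cp^2$ contains no complete curve, and it is where I expect the bulk of the (otherwise routine) work to lie; everything else is a bookkeeping consequence of the decomposition already proved in Lemma~1.
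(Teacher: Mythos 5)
Your reduction is exactly the paper's: apply Lemma~1 with $D=\partial X$ and show $H^1(\partial X,\mathcal{O}_{\partial X})=0$ by exhibiting $\partial X$ as a connected, simply connected tree of rational curves. Two sub-steps differ. First, for the vanishing of $h^1$ of the tree you use the normalization sequence, $h^1(\mathcal{O}_{\partial X})=b_1(\Gamma)+\sum_i g(D_i)$, whereas the paper peels off one branch at a time with the sequence $0\to\mathcal{O}_{D_{red}}\to\mathcal{O}_{D_i}\oplus\mathcal{O}_{D'}\to\mathcal{O}_{D_i\cdot D'}\to 0$; these are equivalent and yours is arguably cleaner. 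Second --- and this is the one place where you make things harder than necessary --- for the rationality of each component $D_i$ you appeal to minimal model theory and to the unproved assertion that the boundary components of a minimal completion of $\cp^2$ are rational. The paper gets this for free from data you already have: the Mayer--Vietoris sequence for the covering of $|\partial X|$ by the $D_i$ gives a surjection $H^1(|\partial X|,\zp)\to\bigoplus_{i} H^1(D_i,\zp)$ (because $H^1$ of the finite node set vanishes), and the vanishing $H^1(|\partial X|,\zp)=0$ established in Lemma~1 then kills each $H^1(D_i,\zp)$, i.e.\ each $D_i$ is rational. Your alternative Serre-duality route, $H^1(\mathcal{O}_{\partial X})\simeq H^0(X,\Omega_X^2(\partial X))^{\vee}$, is genuinely different and attractive --- in the toric case it is immediate from formula $(8)$, since $P_{\partial X+K_X}$ is empty for a complete fan --- but as stated for general completions it again leans on the classification of minimal completions of $\cp^2$ and on a pushforward computation for the log canonical bundle that you only sketch, so I would not rely on it without filling those steps in.
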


\begin{proof}
By the previous lemma, it's enough to show that $H^1(D_{red},\mathcal{O}_{D_{red}})=0$ where $\partial X=D_{red}$ is the reduced part of $D$. Since $H^1(Z,\zp_{Z})=0$ for a finite set $Z$, there is a surjection
$$
H^1(|D|,\zp_{|D|})\rightarrow \bigoplus_{i=1}^r H^1(D_i,\zp_{D_i}).
$$ 
Thus $H^1(D_i,\zp_{D_i})=0$ for all $i$ and each $D_i$ is a \textit{rational} curve. In particular, $H^1(\mathcal{O}_{D_{i}})=0$. Moreover, $H_c^1(X_0)=0$ so that $H^0(X,\zp_{X})\rightarrow H^0(|D|,\zp_{|D|})$ is surjective and $|D|$ is connected. This shows that $|D|$ is a connected and simply connected tree of rational curves.  There thus exists $i$ so that $D_{red}=D_i+D'$, where $|D_i|\cap |D'|$ is a point and $D'$ remains a connected and simply connected tree. By induction on the number of irreducible branches of $D_{red}$, it's enough to show that $H^1(\mathcal{O}_{D_{red}})\simeq H^1(\mathcal{O}_{D'})$. The short exact sequence
$$
0\rightarrow \mathcal{O}_{D_{red}}\rightarrow \mathcal{O}_{D_i}\oplus \mathcal{O}_{D'} \rightarrow \mathcal{O}_{D_i\cdot D^{'} } \rightarrow 0
$$
gives the long exact sequence in cohomology 
\begin{eqnarray*}
0 &\rightarrow& H^0(\mathcal{O}_{D_{red}})\rightarrow H^0(\mathcal{O}_{D_i})\oplus H^0(\mathcal{O}_{ D'}) \rightarrow H^0(\mathcal{O}_{D_i \cdot D'}) \\ 
&\rightarrow& H^1(\mathcal{O}_{D_{red}})\stackrel{r}{\rightarrow} H^1(\mathcal{O}_{D_i})\oplus H^1(D',\mathcal{O}_{D'})\rightarrow H^1(\mathcal{O}_{D_i \cdot D'}).
\end{eqnarray*}
By assumption, $D_{red}$ is a \textit{normal crossing divisor} and $D_i \cdot D'=\{pt\}$ as a \textit{reduced} subscheme. The diagram then begins by $0 \rightarrow \cp \rightarrow \cp\oplus \cp \rightarrow \cp$, which forces $r$ to be injective. Since $H^1(\mathcal{O}_{D_i})=H^1(\mathcal{O}_{\{pt\}})=0$, this gives $H^1(\mathcal{O}_{D_{red}})\simeq H^1(\mathcal{O}_{D'})$. 
\end{proof}
By the previous corollary, there exists a unique line bundle $\wt{\mathcal{L}}\in Pic(X)$ so that $\wt{\mathcal{L}}_{|\partial X}\simeq\mathcal{L}_{|\partial X}$ and $\mathcal{L}$ lifts to $X$ if and only if 
$$
\mathcal{L}_0:=\mathcal{L}\otimes\wt{\mathcal{L}}_{|D}^{-1}\simeq\mathcal{O}_D.
$$
Since $\delta_D(\mathcal{L}_0)=0$ (by construction), there exists $\beta\in H^1(D,\mathcal{O}_D)$ with $e(\beta)=\mathcal{L}_0$. Moreover $e$ is injective and 
$$ 
\mathcal{L}_0\simeq \mathcal{O}_D \iff \beta=0.
$$
There remains to make explicit such a condition. We first use Cech cohomology. Suppose that $\mathcal{L}_0$ is given by the $1$-cocycle class $g=\{g_{UV}\}\in H^1(D,\mathcal{O}_D^*)$, relative to some open covering $\mathcal{U}$ of $X$. 
We can suppose $\mathcal{U}$ fine enough to ensure a logarithmic determination 
$$
\wt{h}_{UV}:=\frac{1}{2i\pi} \log(\wt{g}_{UV})\in \mathcal{O}_X(U\cap V),
$$
where $\wt{g}_{UV}\in\mathcal{O}^*_X(U\cap V)$ is some local lifting of $g_{UV}$. Since $\delta_D(g)=0$, the classes $h_{UV}\in \mathcal{O}_D(U\cap V)$ of $\wt{h}_{UV}$  define a $1$-cocycle class $\{h_{UV}\}\in H^1(D,\mathcal{O}_D)$ which represents $\beta$ (this definition does not depend on the choice of the liftings). Since $X$ is rational, the structural sequence for $D$
$$
0 \longrightarrow \mathcal{O}_X(-D) \longrightarrow \mathcal{O}_X  \longrightarrow  \mathcal{O}_D \longrightarrow 0 
$$
gives rise to a coboundary \textit{isomorphism} 
$
\delta : H^1(D,\mathcal{O}_D)\rightarrow H^2(X,\mathcal{O}_X(-D))
$
while Serre Duality gives a non degenerate pairing 
$$
H^2(X,\mathcal{O}_X(-D))\otimes H^0(X,\Omega_X^2(D)) \stackrel{(\cdot,\cdot)}{\longrightarrow} H^2(X,\Omega^2_X)\stackrel{\Tr}{\simeq} \cp,
$$
where $\Tr$ is the trace map \cite{BHPV:gnus}. We identify $\mathcal{O}_X(-D)$ with the sheaf of functions vanishing on $D$ and $\Omega_X^2(D)$ with the sheaf of meromorphic $2$-forms with polar locus bounded by $D$. The Serre pairing 
$
(\delta(\beta),\Psi)
$
with $\Psi\in H^0(X,\Omega^2_X(D))$ is represented by the $2$-cocyle class
$$
\zeta_{\Psi}:=\Big\{(\wt{h}_{UV}+\wt{h}_{VW}+\wt{h}_{WU})\Psi_{|U\cap V\cap W}\Big\}\in H^2(X,\Omega^2_X)
$$
and $\beta=0$ if and only if
$$
\Tr(\zeta_{\Psi})=0\quad\forall\,\,\Psi\in H^0(X,\Omega^2_X(D)).
$$
To make explicit the complex numbers $\Tr(\zeta_{\Psi})$, we use the Dolbeault $\bar{\partial}$-resolution of $\Omega_X^2$. We denote by $\mathcal{D}_X^{(p,q)}$ the sheaf of germs of $(p,q)$-currents on $X$. Let  $\psi$ be a germ of meromorphic $q$-form  at $p\in X$. We recall for convenience that the \textit{principal value} current $[\psi]\in \mathcal{D}_{X,p}^{(q,0)}$ and the \textit{residue current} $\bar{\partial}[\psi]\in \mathcal{D}_{X,p}^{(q,1)}$ have the Cauchy integral representations
$$
\langle [\psi], \theta\rangle :=\lim_{\ep\rightarrow 0} \frac{1}{2i\pi} \int_{U\cap\{|u|>\ep\}} \psi\land \theta
$$
and
$$
\langle \bar{\partial}[\psi], \theta \rangle  := \lim_{\ep\rightarrow 0} \frac{1}{2i\pi} \int_{U\cap\{|u|=\ep\}} \psi\land\theta,
$$
where $\theta$ is some test-form with appropriate bidegree and $u=0$ is a local equation for the polar divisor of  $\psi$ in a small neighborhood $U$ of $p$ (see \cite{T:gnus} for instance). The Dolbeault resolution of $\Omega_X^2$ is  given by the exact complex of sheaves
$$
0\longrightarrow \Omega_X^2 \stackrel{[\cdot]}{\longrightarrow} \mathcal{D}_X^{(2,0)} \stackrel{\bar{\partial}}{\longrightarrow}\mathcal{D}_X^{(2,1)} \stackrel{\bar{\partial}}{\longrightarrow}\mathcal{D}_X^{(2,2)} \longrightarrow 0.
$$
This sequence breaks in the two short exact sequences
$$
0\rightarrow \Omega_X^2 \rightarrow \mathcal{D}_X^{(2,0)} \stackrel{\bar{\partial}}{\rightarrow}\mathcal{Z}_X^{(2,1)}\rightarrow 0
\quad {\rm 
and}\quad
0\rightarrow \mathcal{Z}_X^{(2,1)}\rightarrow \mathcal{D}_X^{(2,1)} \stackrel{\bar{\partial}}{\rightarrow}\mathcal{D}_X^{(2,2)} \rightarrow 0,
$$ 
where $\mathcal{Z}_X^{(2,1)}\subset \mathcal{D}_X^{(2,1)}$ is the subsheaf of $\bar{\partial}$-closed $(2,1)$-currents. Since the sheaves $\mathcal{D}_X^{(p,q)}$ are fine, we obtain the two coboundary \textit{isomorphisms} 
$$
\frac{H^0(X,\mathcal{D}_X^{(2,2)})}{\bar{\partial}H^0(X,\mathcal{D}_X^{(2,1)})}\stackrel{\delta_1}{\longrightarrow} H^1(X,\mathcal{Z}_X^{(2,1)})\quad {\rm and }\quad H^1(X,\mathcal{Z}_X^{(2,1)})\stackrel{\delta_2}{\longrightarrow}H^2(X,\Omega_X^2).
$$
For any $\Psi\in H^0(X,\Omega^2_X(D))$, there thus exists a global current $T_{\Psi}\in H^0(X,\mathcal{D}_X^{(2,2)})$ whose class $[T_{\Psi}]$ modulo $\bar{\partial}$ is unique solution to $\zeta_{\Psi}=\delta_2\circ \delta_1([T_{\Psi}])$, and we have equality
$$
\Tr(\zeta_{\Psi})=\langle T_{\Psi},1\rangle.
$$
To make explicit $T_{\Psi}$, we need the following lemma.
\begin{lemm}
1. There exists a Cartier divisor  $\wt{\gamma}$ on an open neighborhood $B$ of $|D|$ which restricts to $\gamma$ on $D$.

2. There exists a Cartier divisor $E$ on $X$ which restricts to $\Gamma$ on $D_{red}$ and there is an isomorphism $\wt{\mathcal{L}}\simeq\mathcal{O}_X(E)$.
\end{lemm}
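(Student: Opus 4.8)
I would prove the two assertions separately. Assertion 1 is a gluing argument for local liftings of $\gamma$; assertion 2 is an application of Serre vanishing, made possible by the fact that $X_0\simeq\cp^2$ is affine, and its line bundle part drops out for free from the injectivity of $\Pic(X)\to\Pic(\partial X)$ proved above.

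\textbf{Part 1.} Since $\gamma$ is supported at the finite set $|\Gamma|$, write $\gamma=\sum_{p\in|\Gamma|}\gamma_p$ with $\gamma_p$ trivial away from $p$. First lift each $\gamma_p$ locally: choosing a local equation of $\gamma_p$ of the form $g_p/g_p'$ with $g_p,g_p'$ non-zero-divisors in $\mathcal{O}_{D,p}$, lifting them to $\mathcal{O}_{X,p}$ and taking the quotient, one gets a meromorphic germ $f_p$, defined on a ball $B_p\ni p$, such that $\Div(f_p)$ restricts to $\gamma_p$ on $D\cap B_p$. Set $E_p:=\Div(f_p)$. Because $\Div(f_p)$ has no zero or pole along the components of $D$ and restricts to a divisor supported at $p$, we get $\operatorname{supp}(E_p)\cap|D|=\{p\}$. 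Now shrink to $p\in B_p''\Subset B_p'\Subset B_p$, the $B_p$ pairwise disjoint; the compact set $|D|\setminus\bigcup_pB_p''$ is disjoint from every $\operatorname{supp}(E_p)$, so it has an open neighborhood $W_0$ meeting no $\operatorname{supp}(E_p)$. Then $B:=W_0\cup\bigcup_pB_p'$ is an open neighborhood of $|D|$, and the data ``trivial divisor on $W_0$, $E_p$ on $B_p'$'' agree on all overlaps and therefore glue to a Cartier divisor $\wt\gamma$ on $B$; by construction $\wt\gamma$ restricts to $\gamma$ on $D$.

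\textbf{Part 2.} By definition of $\wt{\mathcal{L}}$ and since $\mathcal{L}=\mathcal{O}_D(\gamma)$, we have $\wt{\mathcal{L}}_{|\partial X}\simeq\mathcal{L}_{|\partial X}\simeq\mathcal{O}_{\partial X}(\Gamma)$, with $\Gamma$ Cartier on $\partial X$ because $|\Gamma|$ avoids the crossing points. As $X\setminus|\partial X|=X_0\simeq\cp^2$ is affine, $|\partial X|$ is the support of an ample effective Cartier divisor $D_{amp}=a_1D_1+\cdots+a_rD_r$ with all $a_i>0$; put $\mathcal{M}:=\mathcal{O}_X(D_{amp})_{|\partial X}$, of positive degree on every $D_i$. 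Fix $n$ large enough that (i) $\mathcal{M}^{\otimes n}$ is very ample on the connected tree of rational curves $\partial X$ and (ii) $H^1(X,\wt{\mathcal{L}}\otimes\mathcal{O}_X(nD_{amp}-\partial X))=H^1(X,\mathcal{O}_X(nD_{amp}-\partial X))=0$ (Serre vanishing). By (i), $\mathcal{O}_{\partial X}(\Gamma)\otimes\mathcal{M}^{\otimes n}$ has a section $s_\Gamma\cdot s'$, where $s_\Gamma$ is the canonical section of $\mathcal{O}_{\partial X}(\Gamma)$ and $\Div(s')=\Delta$ is reduced, disjoint from $|\Gamma|$ and from $\operatorname{Sing}(\partial X)$. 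By (ii), the restriction maps $H^0(X,\wt{\mathcal{L}}\otimes\mathcal{O}_X(nD_{amp}))\to H^0(\partial X,\mathcal{O}_{\partial X}(\Gamma)\otimes\mathcal{M}^{\otimes n})$ and $H^0(X,\mathcal{O}_X(nD_{amp}))\to H^0(\partial X,\mathcal{M}^{\otimes n})$ are surjective; lifting $s_\Gamma s'$ and $s'$ yields effective divisors $E'\in|\wt{\mathcal{L}}\otimes\mathcal{O}_X(nD_{amp})|$ and $E''\in|nD_{amp}|$, neither containing any $D_i$, which restrict on $\partial X$ to $\Gamma+\Delta$ and $\Delta$ respectively. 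Then $E:=E'-E''$ is a Cartier divisor on $X$ with $E_{|D_{red}}=\Gamma$ and $\mathcal{O}_X(E)\simeq\wt{\mathcal{L}}\otimes\mathcal{O}_X(nD_{amp})\otimes\mathcal{O}_X(nD_{amp})^{-1}\simeq\wt{\mathcal{L}}$; the last isomorphism is in any case forced by the injectivity of $\Pic(X)\to\Pic(\partial X)$.

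\textbf{Main obstacle.} For part~1 it is the passage from purely local liftings to a global one, handled by shrinking the balls and using compactness of $|D|$. For part~2 it is that the isomorphism $\Pic(X)\simeq\Pic(\partial X)$ yields only a \emph{line bundle} $\wt{\mathcal{L}}$ restricting to $\mathcal{O}_{\partial X}(\Gamma)$, not a \emph{divisor} restricting to $\Gamma$; the remedy is to lift the section $s_\Gamma$ via Serre vanishing after twisting by $nD_{amp}$, then cancel that auxiliary twist by running the same lifting argument once more on $\mathcal{O}_X(nD_{amp})$ and subtracting.
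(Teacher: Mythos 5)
Your argument is correct and, in substance, follows the same two-step strategy as the paper. For part 1 the paper also glues local meromorphic liftings of $\gamma$ over a shrinking neighbourhood of $|D|$: it fixes a tubular neighbourhood $B_{\ep}=\{|u|<\ep\}$, lifts the local equations $f_U$ of a meromorphic section of $\mathcal{L}$ to $\wt{f}_U$, and shrinks $\ep$ so that the ratios $\wt{f}_U/\wt{f}_V$ become invertible on overlaps; this is the same compactness-and-shrinking argument as your point-by-point version with the $E_p$ and the trivial divisor on $W_0$. For part 2 both proofs twist by a large multiple of an ample divisor, use Serre vanishing to make the restriction to $D_{red}$ surjective on global sections, lift, and then remove the twist; your construction of $E$ and the identification $\mathcal{O}_X(E)\simeq\wt{\mathcal{L}}$ are sound. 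Two details differ, both in part 2. First, the paper takes an \emph{arbitrary} very ample effective $F$ meeting $|D_{red}|$ properly; since $(nF)_{|D_{red}}=n\Gamma_F$ is already a Cartier divisor on $D_{red}$, it sets $E=G-nF$ with a \emph{single} lifted divisor $G$. You instead insist on an ample divisor $D_{amp}$ supported on $|\partial X|$, which forces you to lift a second auxiliary section $s'$ and subtract $E''$. This costs you an appeal to Goodman's theorem (affineness of $X_0$ implies $|\partial X|$ supports an effective ample divisor --- true for surfaces, but nontrivial and stated by you without justification) and buys nothing: the only property of $D_{amp}$ you use is that its restriction to each $D_i$ has positive degree, which holds for any ample divisor on $X$. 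Second, you obtain $\mathcal{O}_X(E)\simeq\wt{\mathcal{L}}$ both from the construction and from the injectivity of $\Pic(X)\to\Pic(\partial X)$; the paper invokes only the latter (its Corollary 1). Neither difference affects correctness, but the paper's choice of $F$ is the cleaner route and you should adopt it to avoid the unproved appeal to Goodman.
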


\begin{proof}
1. By mean of a partition of unity, we can construct a $\mathcal{C}^{\infty}$ function $u$ on $X$ which vanishes exactly on $|D|$, giving an open tubular neighborhood of $D$
$$
B_{\ep}=\{|u|<\ep\}.
$$
Let $f=\{f_U\}$ be a meromorphic section of $\mathcal{L}$ with Cartier divisor $\gamma$, and consider some local meromorphic liftings $\wt{f}_U$. We can choose $\mathcal{V}$ a sufficiently fine covering of $B_{\ep}$ so that $\wt{f}_U\in\mathcal{O}_{X}(U)^*$ for $U\cap |\Gamma|=\emptyset $ and so that $U\cap V\cap |\Gamma|=\emptyset$ for distinct $U, V\in \mathcal{V}$. If now $U$ intersects $|\Gamma|$, we can choose $\ep'<\ep$ small enough so that $\wt{f}_U$ has no poles nor zeroes on $U\cap V \cap B_{\ep'}$. In such a way, $\wt{f}_U/\wt{f}_V$ is invertible on $U\cap V\cap B_{\ep'}$, giving a Cartier divisor $\wt{\gamma}$ on $B_{\ep'}$ which restricts to $\gamma$ on $D$.

2. Let $\mathcal{F}=\mathcal{O}_X(F)$ be some very ample line bundle on $X$ with $F$ an effective divisor which intersects properly $|D_{red}|$. The isomorphisms 
$$
\mathcal{O}_{D_{red}}(\Gamma)\simeq \mathcal{L}_{|D_{red}}\simeq\wt{\mathcal{L}}_{|D_{red}}
$$
combined with the structural sequence for $D_{red}$ gives the exact sequence
$$
0\rightarrow \wt{\mathcal{L}}\otimes\mathcal{O}_X(nF-D_{red})\rightarrow \wt{\mathcal{L}}\otimes\mathcal{O}_X(nF)\rightarrow \mathcal{O}_{D_{red}}(\Gamma+n\Gamma_F)\rightarrow 0
$$
for any integer $n$, where $\Gamma_F=F \cdot D_{red}$. For $n$ big enough, we have 
$$
H^1(\wt{\mathcal{L}}\otimes \mathcal{O}_X(nF-D_{red}))=0
$$ 
by Serre Vanishing Theorem so that $\Gamma+n\Gamma_F$ lifts to some Cartier divisor $G$ on $X$. Then, the Cartier divisor $E=G-nF$ restricts to $\Gamma$ on $D_{red}$. By construction $\mathcal{O}_X(E)$ and $\mathcal{L}=\mathcal{O}_D(\gamma)$ have the same restriction to $D_{red}=\partial X$, and Proposition $1$  implies $\wt{\mathcal{L}}\simeq\mathcal{O}_X(E)$.
\end{proof}
Let $B, \wt{\gamma}$ and $E$ as in Lemma $2$, with $\mathcal{V}$ the associated open covering of $B$. Since 
$
\mathcal{O}_{B}(\wt{\gamma})\otimes \mathcal{O}_{B}(-E)
$
restricts to $\mathcal{L}_{0}$ on $D$, we can choose the liftings $\wt{g}_{UV}=m_U/m_V$, where $m=\{m_U\}_{U\in\mathcal{V}}$ is the global meromorphic section of $\mathcal{O}_{B}(\wt{\gamma})\otimes \mathcal{O}_{B}(-E)$ with Cartier divisor 
$$
\Div(m)=\wt{\gamma} - E_{|B}
$$ 
on $B$. For $B$ small enough, $m_{U|U\cap V}\in\mathcal{O}_B^*(U\cap V)$ for distinct $U, V\in\mathcal{V}$ and there exists a logarithmic determination
$$
\log (m_{U|U\cap V}):=\sum_{n=1}^{\infty}\frac{1}{n}\big(1-m_{U|U\cap V}\big)^n.
$$ 
Since $\Div(m)\cdot D_{red}=0$, the restriction of $m$ to $D_{red}$ is a constant, that we can suppose to be $1$. There thus exists $n_0\in\np$ such that the meromorphic function $(1-m_U)^n$ vanishes on the divisor $D\cap U$ for all $n\ge n_0$ and all $U\in\mathcal{V}$. 

Let $\Psi\in H^0(X,\Omega^2_X(D))$. We can multiply a principal value current (or a residue current) with a residue current as soon as their supports have a proper intersection. Then, by the Duality Theorem \cite{T:gnus}, we have
$$
\big[\big(1-m_U\big)^n\big]\bar{\partial}\big[\Psi_U\big]=0\quad\forall\,n\ge n_0
$$
so that the principal value currents 
$$
S_U:=\sum_{n=1}^{n_0}\frac{1}{n}\Big[\big(1-m_U\big)^n\Big]
$$
satisfy 
\begin{eqnarray}
(S_U-S_V)\bar{\partial}\big[\Psi_{U\cap V}\big]=\big[\log(\wt{g}_{U\cap V})\big]\bar{\partial}\big[\Psi_{U\cap V}\big]
\end{eqnarray}
for all $U, V\in\mathcal{V}$. We obtain in such a way a global $(2,2)$-current $T_{B}\in \Gamma(B,\mathcal{D}_B^{(2,2)})$ on $B$, locally given by
$$
T_{U}=\bar{\partial}S_U\land \bar{\partial}[\Psi_U]
$$
for all $U\in \mathcal{V}$. Since $\Psi$ is holomorphic outside $|D|$, we have $T_U=0$ when $U\cap |D|=\emptyset$ and we can extend $T_B$ by zero to a global current $T$ on $X$. Using $(6)$ and the definition of the coboundary maps, we check easily that $T$ is solution to
$$
\delta_2\circ \delta_1(T)=\zeta_{\Psi}.
$$

Let now $\psi$ be some rational $1$-form on $X$ so that
$d\psi_{|X_0}=\Psi_{|X_0}$. For all $U\in\mathcal{V}$, we define the local currents
$$
R_{U}:=\Big(d S_U-\Big[\frac{dm_U}{m_U}\Big]\Big) \land \bar{\partial}\big[\psi_U\big]\quad{\rm and}\quad T'_U=\bar{\partial} \Big[\frac{dm_U}{m_U}\Big] \land \bar{\partial}\big[\psi_U\big].
$$
From $(6)$ we deduce equalities $R_{U|U\cap V}=R_{V|U\cap V}$, and the $R_U$'s define a global $(2,1)$-current $R_B$ on $B$. By assumption, $m_U/m_V$ is invertible on $U\cap V$ so that $T'_{U|U\cap V}=T'_{V|U\cap V}$, giving a global $(2,2)$-current $T'_B$ on $B$. Both currents $R_B$ and $T'_B$ are supported on $|D|\subset B$ and can be extended by zero to global currents $R$ and $T'$ on $X$. 

Since $T_U$ has support a finite set, Stokes Theorem gives equalities
\begin{eqnarray*}
\langle T_{U}, 1\rangle &=& \langle \bar{\partial}S_U\land \bar{\partial}[\Psi_U], 1\rangle \\
&=& \langle \bar{\partial}S_U\land \bar{\partial}d[\psi_U],1\rangle\\
&=& \langle \bar{\partial} (dS_U)\land \bar{\partial}[\psi_U],1\rangle = \langle T'_{U}+\bar{\partial}{R_U}, 1\rangle.
\end{eqnarray*}
Thus, $\langle T, 1 \rangle=  \langle T'+\bar{\partial}R, 1\rangle = \langle T', 1 \rangle$.
Since $\Div(m)=\wt{\gamma}-E_{|B}$, the Lelong-Poincar\'e equation gives equality
$$
\langle T', 1\rangle= \langle T'_B, 1\rangle = \langle [\wt{\gamma}]\land\bar{\partial}[\psi]_{|B}, 1\rangle-\langle [E]_{|B}\land\bar{\partial}[\psi]_{|B}, 1\rangle,
$$
where $[\cdot]$ designs here the integration current associated to analytic cycles. 
The current $[E]_{|B}\land\bar{\partial}[\psi]_{|B}$ is supported on the compact subset $|\Gamma|\subset B$,  and the integration current is $\bar{\partial}$-closed. We deduce
$$
\langle [E]_{|B}\land\bar{\partial}[\psi]_{|B}, 1\rangle=\langle [E]\land\bar{\partial}[\psi], 1\rangle=\langle \bar{\partial}([E]\land[\psi]), 1\rangle=0.
$$
The analytic set $B\cap \wt{\gamma}$ is a disjoint union of analytic cycles  $\wt{\gamma}_p=\{f_p=0\}$. If $\psi_p$ is the germ of $\psi$ at $p$ in the chosen local coordinates, we obtain finally
\begin{eqnarray*}
\Tr(\zeta_{\Psi}) &=& \sum_{p\in|\Gamma|}\langle [\wt{\gamma}_p]\land\bar{\partial}[\psi_p],1 \rangle\\
&=& \sum_{p\in|\Gamma|} \res_p\big(\frac{df_p}{f_p}\land \psi_p\big)= \sum_{p\in|\Gamma|}\langle \gamma,\Psi\rangle_p,
\end{eqnarray*}
and it follows that $(2)$ is equivalent to that $\mathcal{L}=\mathcal{O}_D(\gamma)$ extends to $\wt{\mathcal{L}}\in \Pic(X)$. Note that the previous expression only depends on the line bundle $\mathcal{O}_D(\gamma)$ and on $\Psi$ by construction.
If $\mathcal{L}$ extends to $\wt{\mathcal{L}}$, we can use Vanishing Serre Theorem as in Lemma $2$ (with $(D,\gamma)$ instead of $(D_{red},\Gamma)$) and show that $\wt{\mathcal{L}}\simeq \mathcal{O}_X(E)$ for some Cartier divisor $E$ on $X$ which restricts to $\gamma$ on $D$. The lifting bundle $\wt{\mathcal{L}}$ being unique (up to isomorphism), the osculating divisor $E$ is unique up to rational equivalence. This ends the proof of the first point. 

If conditions $(2)$ hold, then $\mathcal{L}\simeq \mathcal{O}_D(E)$ for some Cartier divisor $E$ on $X$. By tensoring the structural sequence of $D$ with $\wt{\mathcal{L}}=\mathcal{O}_X(E)$, we obtain the short exact sequence
$$
0\rightarrow \mathcal{O}_X(E-D)\rightarrow \mathcal{O}_X(E)\rightarrow \mathcal{O}_D(E)\rightarrow 0.
$$
If $\gamma$ is effective and $H^1(X,\mathcal{O}_X(E-D))=0$, the global section $f\in H^0(D,\mathcal{O}_D(E))$ with zero divisor $\gamma$ automatically lifts to $\wt{f}\in H^0(X,\mathcal{O}_X(E))$ and $C:=\Div_0(\wt{f})$ is an \textit{effective} osculating divisor for $(D,\gamma)$. This ends the proof of Theorem $1$. $\hfill{\square}$

\vskip2mm
\noindent

\subsection{An explicit formula in the toric case} We show now that we can make explicit the conditions $(2)$ in the case of a toric variety $X$. We refer the reader to \cite{F:gnus} and \cite{Danilov:gnus} for an introduction to toric geometry.

\subsubsection{Preliminaries} Suppose that $X$ is a \textit{toric} surface containing $X_0$ as an union of orbits. Then, $X$ has toric divisors $D_0,\ldots,D_{r+1}$ where 
$$
\partial X=D_1+\cdots +D_r
$$
is the boundary of $X$ and $D_0$ and $D_{r+1}$ are the Zariski closure of the $1$-dimensional orbits of $X_0\simeq \cp^2$. 

Let $\Sigma$ be the fan of $X$. We denote by $\rho_i\in\Sigma$ the ray associated to the toric divisor $D_i$. Since $\Sigma$ is regular, we can order the $D_i$'s in such a way that the generators $\eta_i$ of the monoids $\rho_i\cap \zp^2$ satisfy 
$$
\det(\eta_{i},\eta_{i+1})=1
$$
(with convention $\eta_{r+2}=\eta_{0}$). We denote by $U_i$
the affine toric chart associated to the two-dimensional cone $\rho_{i}\rp^+\oplus \rho_{i+1}\rp^+$. Thus, 
$$
U_i= \Spec\cp[x_i,y_i]\simeq \cp^2,
$$
where torus coordinates $t=(t_1,t_2)$ and affine coordinates $(x_i,y_i)$ are related by relations
$$
t^m=x_i^{\langle m,\eta_i \rangle} y_i^{\langle m,\eta_{i+1}\rangle}
$$
for all $m=(m_1,m_2)\in\zp^2$, where $t^m=t_1^{m_1}t_2^{m_2}$ (see \cite{Danilov:gnus}). With these conventions, the toric divisors $D_{i}$ and $D_{i+1}$ have respective affine equations 
$$
D_{i|U_i}=\{x_i=0\}\quad{\rm and}\quad D_{i+1|U_i}=\{y_i=0\}
$$ 
and $|D_j|\cap U_i=\emptyset$ for $j\ne i,i+1$. 

We can associate to any toric divisor $E=\sum_{i=0}^{r+1} e_i D_i$ a polytope 
$$
P_E=\{m\in\rp^2,\langle m,\eta_i\rangle + e_i \ge 0, \,\,i=0,\ldots,r+1 \},
$$
where $\langle\cdot,\cdot \rangle $ designs the usual scalar product in $\rp^2$. Each Laurent monomial $t^m$ defines a rational function on $X$ with divisor
$$
\Div(t^m)=\sum_{i=0}^{r+1} \langle m,\eta_i \rangle D_i
$$
and there is a natural isomorphism
\begin{eqnarray}
H^0(X,\mathcal{O}_X(E))\simeq \bigoplus_{m\in P_{E}\cap \zp^2} \cp \ \cdot\,t^m,
\end{eqnarray}
where $\mathcal{O}_X(E)$ is the sheaf of meromorphic functions with polar locus bounded by $E$. 
Similary, the rational form  $\Psi_m$ defined by
$$
\Psi_{m|(\cp^*)^2}= t^m\frac{dt_1\land dt_2}{t_1 t_2}
$$
has divisor
$
\Div(\Psi_m)=\Div(t^m)-K_X
$
where 
$
-K_X:=D_0+\cdots +D_{r+1}
$
is an anticanonical divisor, and there is equality
\begin{eqnarray}
H^0(X,\Omega_X^2(E))=\bigoplus_{m\in P_{E+K_X}\cap \zp^2} \cp \ \cdot\,\Psi_m.
\end{eqnarray}
\vskip0mm
\noindent
\subsubsection{Explicit osculating criterions}
Let $(D,\gamma)$ be an osculation data on $\partial X$. We note $\Gamma=\gamma\cdot \partial X$ and $\Gamma_i=\gamma\cdot D_i$. By $(8)$, the conditions $(2)$ of Theorem $1$ are equivalent to that 
\begin{eqnarray}
\sum_{p\in|\Gamma|} \langle \gamma,\Psi_m \rangle_p =0 \quad \forall m\in P_{D+K_X}\cap \zp^2.
\end{eqnarray}
We give here an explicit formula for the residues $\langle \gamma,\Psi_m \rangle_p$ when the following hypothesis holds:
\vskip2mm
\noindent
\begin{center}
$(H_1)\quad$ {\it The zero-cycle $\Gamma$ is reduced and does not contain any torus fixed points.}
\end{center}
\vskip2mm
\noindent
In such a case, the germs $\wt{\gamma}_p$ are smooth, irreducible, and have transversal intersection with $\partial X$. For each $p\in|\Gamma|$, there is an unique component $D_i$ containing $p$ and we can choose the Weiertrass equation
$$
f_p(x_i,y_i)=y_i-\phi_p(x_i)
$$
for the germ $\wt{\gamma}_p$ in the affine coordinates of $U_i$.  The analytic function $\phi_p\in\cp\{x_i\}$ is well-defined modulo $(x_i^{k_i+1})$ and does not vanish at $0$. We obtain the following

\vskip4mm

\begin{prop} 
Suppose that $\Gamma$ satisfies $(H_1)$ and let $m\in P_{D+K_X}\cap \zp^2$. Then,
\begin{eqnarray}
\langle \gamma,\Psi_m \rangle_p =\frac{1}{(-\langle m,\eta_{i}\rangle) !}\frac{\partial^{-\langle m,\eta_{i}\rangle}}{\partial x_i^{-\langle m,\eta_{i}\rangle}}\bigg(\frac{\phi_{p}^{\langle m,\eta_{i+1}\rangle}}{\langle m,\eta_{i+1}\rangle}\bigg)(0)
\end{eqnarray}
for all $p\in|\Gamma_i|$ and all $i=1,\ldots,r$. We use here conventions $\phi_{p}^{k}/k:=\log(\phi_p)$ for $k=0$ and $\partial^{a}/\partial x^a(\phi_{p}^{k}/k):=0$ for $a<0$.
\end{prop}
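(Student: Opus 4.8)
The plan is to compute $\langle\gamma,\Psi_m\rangle_p$ by a direct calculation in the affine toric chart $U_i=\Spec\cp[x_i,y_i]$ containing $p$, using the fact recorded in Subsection $2.2$ that this residue depends only on the germ $\gamma_p$ and on $\Psi_m$ (and not on the choice of global primitive $\psi$), so that I am free to work with a convenient \emph{local} primitive. First I would fix such a chart, in which $D_i=\{x_i=0\}$; since $(H_1)$ forbids torus fixed points, $p$ lies in the locus $y_i\ne0$, so the Weierstrass equation $f_p=y_i-\phi_p(x_i)$ has $\phi_p(0)\ne0$. This makes $\phi_p^{\,b}$ and $\log\phi_p$ well-defined holomorphic germs at $0$ for every $b\in\zp$, and $h_p=x_i^{\,k_i+1}$ is a local equation of $D$ at $p$.

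Next I would make $\Psi_m$ explicit near $p$. Writing $t_1,t_2$ as Laurent monomials in $x_i,y_i$ through the relations $t^m=x_i^{\langle m,\eta_i\rangle}y_i^{\langle m,\eta_{i+1}\rangle}$ and using $\det(\eta_i,\eta_{i+1})=1$, the invariant form $\frac{dt_1\wedge dt_2}{t_1t_2}$ becomes $\frac{dx_i\wedge dy_i}{x_iy_i}$; hence, setting $a:=\langle m,\eta_i\rangle$ and $b:=\langle m,\eta_{i+1}\rangle$, one gets $\Psi_m=x_i^{\,a-1}y_i^{\,b-1}\,dx_i\wedge dy_i$ as a germ at $p$. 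The hypothesis $m\in P_{D+K_X}\cap\zp^2$, read off from the coefficient $k_i$ of $D_i$ in $D+K_X$, gives $a+k_i\ge0$. I would then choose the local primitive $\psi_p=-\tfrac1b\,x_i^{\,a-1}y_i^{\,b}\,dx_i$ when $b\ne0$ (and $\psi_p=-x_i^{\,a-1}\log y_i\,dx_i$ when $b=0$), checking both $d\psi_p=\Psi_m$ and that $h_p\psi_p$ is holomorphic at $p$, which uses $a+k_i\ge0$ and $y_i(p)\ne0$.

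The computational core is the evaluation of $\res_p\bigl[\tfrac{df_p}{f_p}\wedge\psi_p\bigr]$. Since $df_p=dy_i-\phi_p'\,dx_i$ and $dx_i\wedge dx_i=0$, the wedge collapses to $\tfrac{x_i^{\,a-1}y_i^{\,b}}{b\,(y_i-\phi_p)}\,dx_i\wedge dy_i$, which I would rewrite as $\tfrac{g\,dx_i\wedge dy_i}{(y_i-\phi_p)\,x_i^{\,k_i+1}}$ with $g=\tfrac1b\,x_i^{\,a+k_i}y_i^{\,b}$ holomorphic at $p$. Thus $\langle\gamma,\Psi_m\rangle_p$ is the Grothendieck residue of $g\,dx_i\wedge dy_i$ along the regular sequence $(f_p,h_p)$. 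Performing the holomorphic change of coordinates $u=x_i$, $v=y_i-\phi_p(x_i)$ --- which has Jacobian $1$ and sends $p$ to the origin --- turns the integration cycle into an honest product $\{|u|=\delta\}\times\{|v|=\ep\}$, so Fubini applies: the $v$-integral is Cauchy's formula at $v=0$ and returns $\tfrac1b\,u^{\,a+k_i}\phi_p(u)^{\,b}$, so the remaining $u$-integral picks out the residue at $u=0$ of $\tfrac1b\,u^{\,a-1}\phi_p(u)^{\,b}$, that is, the coefficient of $u^{-a}$ in $\tfrac1b\phi_p(u)^{\,b}$. This coefficient equals $\tfrac1{(-a)!}\,\partial_{x_i}^{-a}\bigl(\tfrac1b\phi_p^{\,b}\bigr)(0)$ when $-a\ge0$ and is $0$ when $a>0$; substituting $a=\langle m,\eta_i\rangle$ and $b=\langle m,\eta_{i+1}\rangle$ yields the announced formula. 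The case $b=0$ runs verbatim after replacing $\tfrac1b\phi_p^{\,b}$ by $\log\phi_p$ (in accordance with the convention $\phi_p^0/0:=\log\phi_p$), and the negative-order convention absorbs the case $a>0$. As a consistency check, only the Taylor coefficients of $\phi_p$ up to order $-a\le k_i$ enter, matching the fact that $\phi_p$ is defined modulo $x_i^{\,k_i+1}$.

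I expect the only genuinely delicate point to be the passage from the current-theoretic residue of Subsection $2.2$ to the iterated one-variable residue used above. Replacing the global $\psi$ by a local primitive is exactly the Stokes/local-duality remark already made in the text, and evaluating the Grothendieck residue along $(f_p,h_p)$ as an iterated integral is legitimate because $f_p$ and $h_p$ form a regular sequence and, in the $(u,v)$-coordinates, the integration cycle is literally a product of circles, so no transformation law beyond plain Fubini is needed. The remaining ingredients --- the Jacobian computation, the collapse of the wedge, and the extraction of the relevant Taylor coefficient --- are routine.
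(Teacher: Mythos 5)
Your computation is correct and, at its core, follows the same route as the paper: express $\Psi_m$ in the affine chart $U_i$, choose a primitive of $\Psi_m$, and evaluate the Grothendieck residue as an iterated Cauchy integral. The one structural difference is the choice of primitive. The paper takes the single global form $\psi_m=\frac{t^m}{m_2}\frac{dt_1}{t_1}$ (using $m_2=\langle m,\eta_0\rangle\ge 1$), which in the chart has both a $\frac{dx_i}{x_i}$ and a $\frac{dy_i}{y_i}$ component; it therefore computes two iterated residues and recombines them through the identity $\langle e_1,\eta_{i}\rangle \langle m,\eta_{i+1}\rangle-\langle e_1,\eta_{i+1}\rangle\langle m,\eta_{i}\rangle=m_2 \det(\eta_i,\eta_{i+1})=m_2$. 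Your primitive $-\tfrac1b x_i^{a-1}y_i^{b}dx_i$ kills one of the two terms and avoids that bookkeeping; the ensuing Fubini/Cauchy computation and the extraction of the coefficient of $x_i^{-a}$ in $\phi_p^{b}/b$ are right, including the conventions for $a>0$ and for $b=0$.

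The step you dismiss as ``exactly the Stokes/local-duality remark already made in the text'' is, however, a genuine gap as stated. The residue $\res_p\bigl[\frac{df_p}{f_p}\wedge\psi_p\bigr]$ does \emph{not} depend only on $d\psi_p=\Psi_m$ among arbitrary local primitives: adding the closed form $\frac{dx_i}{x_i}$ to $\psi_p$ leaves $d\psi_p$ unchanged but shifts the residue by $\res_p\bigl[\frac{df_p}{f_p}\wedge\frac{dx_i}{x_i}\bigr]=-1$. What the paper's remark actually covers is that two \emph{global rational} primitives whose restrictions to $X_0\simeq\cp^2$ are regular differ by $dg$ with $g$ polynomial, and then $\int_{u_p(\ep)}d\bigl(g\,\tfrac{df_p}{f_p}\bigr)=0$ because $u_p(\ep)$ is a closed torus. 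So you must check that your local primitive is the germ of such an admissible global form. For $b\ne0$ it is: $-\tfrac1b x_i^{a-1}y_i^{b}dx_i=-\tfrac1b\, t^m\frac{dx_i}{x_i}$, and since $\frac{dx_i}{x_i}$ is a $\zp$-combination of $\frac{dt_1}{t_1}$ and $\frac{dt_2}{t_2}$ while $m\in P_{D+K_X}$ forces $m_1,m_2\ge1$, its restriction to $X_0$ is a polynomial $1$-form. For $b=0$ your primitive involves $\log y_i$ and is not rational, so this argument does not apply verbatim; there you should observe that $a\ne0$ (because $0\notin P_{D+K_X}$), so the difference between $-x_i^{a-1}\log y_i\,dx_i$ and an admissible primitive is a closed form with vanishing period around $D_i$, hence contributes nothing to the residue. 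With these two checks added, your proof is complete.
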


\vskip2mm
Since $0\notin P_{D+K_X}$, equality $\langle m,\eta_{i+1}\rangle=0$ implies $\langle m,\eta_{i}\rangle\ne 0$ and previous expression is well-defined since it depends only on the derivatives and logarithmic derivatives of $\phi_p$ evaluated at $0$ (recall that $\phi_p(0)\ne 0$). Moreover, $-\langle m,\eta_{i}\rangle\le k_i$ and $(10)$ only depends on $\phi_p$ modulo $(x_i^{k_i+1})$

\vskip4mm
\noindent
{\it Example 2 (Wood's theorem).} We keep the same hypothesis and notations of Example $1$, with now an osculating order $k\ge 2$. Thus $D=(k+1)L$. Hypothesis $(H_1)$ holds and by  Proposition $1$ , there exists an osculating divisor for $(D,\gamma)$ if and only if 
\begin{eqnarray}
\frac{\partial^{m_1+m_2}}{\partial x^{m_1+m_2}}\Big(\sum_{p\in|\Gamma|}\frac{\phi_{p}^{m_1}}{m_1}\Big)(0)=0,\quad \forall\, m\in(\np^*)^2,\,m_1+m_2\le k.
\end{eqnarray}
Since
$
H^1(\mathcal{O}_{\pp^2}(d-k-1))=0
$
it follows that $(11)$ is also sufficient for the existence of an osculating \textit{curve}. When $k\le d+1$, we recover a theorem of Wood, \cite{Wood2:gnus}. 
\vskip2mm
\noindent

\subsubsection{Proof of Proposition $1$ } 
Let $m\in P_{D+K_X}$. In particular $\langle m,\eta_0 \rangle \ge 1$ and we can suppose that $m_2\ne 0$. The form $\Psi_m$ is holomorphic in $X_0$ and $\Psi_{m|X_0}=d(\psi_{m|X_0})$ where $\psi_m$ is the rational $1$-form on $X$ determined by its restriction
$$
\psi_{m|(\cp^*)^2}=\frac{t^m }{m_2}\frac{dt_1}{t_1}
$$
to the torus. Let $p\in |\Gamma_i|$. We compute the associated residue in the chart $U_i$. If we let $(e_1,e_2)$ be the canonical basis of $\rp^2$, we obtain 
\begin{eqnarray*}
\psi_{m|U_i} = x_i^{\langle m,\eta_i \rangle} y_i^{\langle m,\eta_{i+1} \rangle}\frac{1}{m_2}\big[\langle e_1,\eta_{i}\rangle \frac{dx_i}{x_i}+\langle e_1,\eta_{i+1}\rangle \frac{dy_i}{y_i}\big].
\end{eqnarray*}
If $\langle m,\eta_i \rangle > 0$, the form $\psi_m$ is holomorphic at $p \in|\Gamma_i|$ and $\res_p(\frac{df_p}{f_p}\land\psi_m)=0$. Suppose now that $\langle m,\eta_i \rangle \le 0$. The Cauchy integral representation for Grothendieck residues of meromorphic forms does not depend on $\underline{\ep}$ for $\ep_i$ small enough, thanks to Stokes Theorem. Thus, for sufficiently small $\ep_i$'s, we obtain
\begin{eqnarray*}
&& \res_p\Big[x_i^{\langle m,\eta_i \rangle} y_i^{\langle m,\eta_{i+1} \rangle}\frac{df_p}{f_p}\land \frac{dy_i}{y_i}\Big]\\
&=& -\frac{1}{(2i\pi)^2} \int_{|x_i|=\ep_1,|y_i-\phi_p|=\ep_2} x_i^{\langle m,\eta_i \rangle} y_i^{\langle m,\eta_{i+1} \rangle}\frac{d\phi_p}{y_i-\phi_p}\land \frac{dy_i}{y_i}\\
&=& \frac{1}{(2i\pi)^2} \int_{|x_i|=\ep_1} \Bigg(\int_{|y_i-\phi_p|=\ep_2} y_i^{\langle m,\eta_{i+1} \rangle-1}\frac{dy_i}{y_i-\phi_p}\Bigg)x_i^{\langle m,\eta_i \rangle}d\phi_p\\
&=&  \frac{1}{2i\pi} \int_{|x_i|=\ep} x_i^{\langle m,\eta_i \rangle}\phi_p^{\langle m,\eta_{i+1} \rangle-1}\phi_p'dx\\
&=&  \frac{1}{(-\langle m,\eta_{i}\rangle-1) !}\Bigg[\frac{\partial^{-\langle m,\eta_{i}\rangle}}{\partial x_i^{-\langle m,\eta_{i}\rangle}}\Bigg(\frac{\phi_{p}^{\langle m,\eta_{i+1}\rangle}}{\langle m,\eta_{i+1}\rangle}\Bigg)\Bigg]_{x_i=0}
\end{eqnarray*}
where the two last equalities are application of the Cauchy formula. In the same way, but simpler, we find
\begin{eqnarray*}
\res_p\Big[x_i^{\langle m,\eta_i \rangle} y_i^{\langle m,\eta_{i+1} \rangle}\frac{df_p}{f_p}\land \frac{dx_i}{x_i}\Big]=
\frac{-1}{(-\langle m,\eta_{i}\rangle)!}\Bigg[\frac{\partial^{-\langle m,\eta_{i}\rangle}}{\partial x_i^{-\langle m,\eta_{i}\rangle}}\big(\phi_{p}^{\langle m,\eta_{i+1}\rangle}\big)\Bigg]_{x_i=0},
\end{eqnarray*}
the minus sign coming from the chosen ordering of the numerator's factors. We deduce
$$
\res_p\Big(\frac{df_p}{f_p}\land\psi_m\Big)= \frac{C}{m_2}\times \frac{1}{(-\langle m,\eta_{i}\rangle) !}\Bigg[\frac{\partial^{-\langle m,\eta_{i}\rangle}}{\partial x_i^{-\langle m,\eta_{i}\rangle}}\Bigg(\frac{\phi_{p}^{\langle m,\eta_{i+1}\rangle}}{\langle m,\eta_{i+1}\rangle}\Bigg)\Bigg]_{x_i=0}
$$
where
$$
C=\langle e_1,\eta_{i}\rangle \langle m,\eta_{i+1}\rangle-\langle e_1,\eta_{i+1}\rangle\langle m,\eta_{i}\rangle=m_2 \det(\eta_i,\eta_{i+1})=m_2.
$$
This ends the proof of Proposition $1$. $\hfill{\square}$
\vskip4mm
\noindent

If two algebraic curves of fixed degree osculate each other on a finite subset with sufficiently big contact orders, they necessarily have a common component. We show in the next section that this basic fact permits to apply Theorem $1$ and Proposition $1$ to the Computer Algebra problem of polynomial factorization.

\section{Application to polynomial factorization.} 

This section is devoted to develop a new algorithm to compute the absolute factorization of a bivariate polynomial $f$. The method we introduce is comparable to a toric version of the Hensel lifting. It is in the spirit of the Abu Salem-Gao-Lauder algorithm \cite{Gao:gnus}, but uses vanishing-sums as in the Galligo-Rupprecht algorithm \cite{GR:gnus}. Our main contribution is that the underlying algorithmic complexity depends now on the Newton polytope $N_f$ instead of the degree $deg(f)$. We prove our main results Theorem $2$ and Proposition $2$ in Subsections $3.2$ and $3.3$. We describe the algorithm in Subsection $3.4$ and we comment and compare it with related results in Subsection $3.5$. We discuss non-toric singularities in Subsection $3.6$ and we conclude in the last Subsection $3.7$.

\subsection{Preliminaries and notations} Let $f\in K[t_1,t_2]$ be a bivariate polynomial with coefficients in some subfield $K\subset \cp$. By absolute factorization of $f$, we mean its irreducible decomposition in the ring $\cp[t_1,t_2]$, computed by floatting calculous with a given precision. 

Suppose that $f$ has the monomial expansion
$$
f(t)=\sum_{m\in \np^2} c_m t^m.
$$
We denote by $N_f$ the Newton polytope of $f$, convex hull of the exponents $m\in\np^2$ for which $c_m\ne 0$. We recall that by a theorem of Ostrowski \cite{O:gnus}, we have relations
$$
N_{f_1f_2}=N_{f_1}+N_{f_2}
$$
for any polynomials $f_1, f_2$, where $+$ designs here the Minkowski sum. 
\vskip2mm
\noindent

We assume that the following hypothesis holds.
\vskip2mm
\begin{center}
 $(H_2)\quad$\textit{The Newton polytope of $f$ contains the origin.}
\end{center}
\vskip2mm
This equivalent to that $f(0,0)\ne 0$. 

\begin{rema} We can always find $s\in \zp^2$ so that the Laurent polynomial $ft^{-s}$ becomes a polynomial $f'\in \cp[u_1,u_2]$ with $f'(0,0)\ne 0$ after some \textit{monomial} change of affine coordinates $(t_1,t_2)\mapsto (u_1,u_2)$. This transformation preserves the sparse structure of $f$ so that hypothesis $(H_2)$ is not restrictive for our purpose. 
\end{rema}
An \textit{exterior facet} of $N_f$ is a one-dimensional face $F$ of $N_f$
whose normal inward primitive vector has at least a negative coordinate. The associated normal ray of $N_f$ is called an \textit{exterior ray}.
The associated \textit{facet polynomial} $f_F$ of $f$ is defined to be
$$
f_F=\sum_{m\in F\cap \zp^2} c_m t^m.
$$
The facet polynomials have a one dimensional Newton polytope and become univariate polynomials after a monomial change of coordinates. 
\vskip2mm
\noindent

We construct now a toric completion of $\cp^2$ associated to the exterior facets of $N_f$. To this aim, we  consider the \textit{complete} simplicial fan $\Sigma_f$ of $\rp^2$ whose rays are
$$
\Sigma_f(1)=\{exterior\,\, rays\,\,of\,\, N_f\}\,\cup\,\{(0,1)\rp^+,(1,0)\rp^+\}.
$$
The toric surface 
$
X_f=X_{\Sigma_f}
$ 
is a simplicial toric completion of $\cp^2= Spec\, \cp [t_1,t_2]$ whose boundary $\partial X_f=X_f\setminus \cp^2$ is the sum of the irreducible toric divisors associated to the exterior rays of $N_f$. The intersection of the Zariski closure $C_f\subset X_f$ of the affine curve $\{f=0\}\subset\cp^2$ with the irreducible components of the boundary $\partial X_f$ corresponds to the \textit{non trivial} roots of the exterior facet polynomial of $f$ (see \cite{Kho:gnus} for instance). In particular, the curve $C_f$ does not contain any torus fixed points (it's clear for those contained in $\partial X_f$ and the remaining torus fixed point of $X_0$ does not belong to $C_f$ by $(H_2)$). 

We say  that $f$ satisfies hypothesis $(H_1)$ of Subsection $2.5$ when the zero-cycle $\Gamma_f=C_f\cdot \partial X_f$ does. This corresponds to the case of exterior facet polynomials of $f$ with a square free absolute decomposition in $\cp[t,t^{-1}]$.


The boundary of $X_{f}$ contains the singular locus $S:=Sing(X_f)$ of $X_f$ and is generally not a normal crossing divisor. To this aim, we consider a toric desingularization 
$$
X\stackrel{\pi}{\rightarrow} X_f
$$
of $X_f$, whose exceptional divisor $E$ satisfies $S=\pi(|E|)$. The smooth surface $X$ is now a projective toric $\cp^2$-completion with a toric normal crossing boundary
$$
\partial X=D_1+\cdots +D_r
$$
whose support contains $|E|$.  Since $C_f\cap S=\emptyset$, the total transform $C=\pi^{-1}(C_f)$ of $C_f$ has a proper intersection with  $\partial X$ and the irreducible decomposition of $C$ corresponds to the absolute factorization of $f$. Note that  $C$ is not necessarily reduced. 

As in Subsection $2.5$, we denote by $\rho_0,\ldots,\rho_{r+1}$ the rays of the fan $\Sigma$ of $X$, and by $\eta_0,\ldots,\eta_{r+1}$ their primitive vectors. So $\eta_0=(0,1)$ and $\eta_{r+1}=(1,0)$. The curve $C$ has a positive intersection $C\cdot D_i >0$ when $\rho_i\in\Sigma(1)$ is an exterior ray of $N_f$, and $C\cdot D_i=0$ 
when $\rho_i\in \Sigma(1)\setminus\Sigma_f(1)$ (that is when $\pi(|D_i|)\subset S$). 

The following lemma will be usefull:
\begin{lemm}
There is an unique effective divisor $D$ linearly equivalent to $C+\partial X$ with support $|\partial X|$ and 
$$
P_{D+K_X}\cap\zp^2=N_f\cap (\np^*)^2.
$$
\end {lemm}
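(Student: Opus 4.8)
The plan is to translate all three assertions into the toric dictionary of Subsection~2.5 and then verify them by a direct computation with the Newton polytope; apart from the identification of the divisor class of $C$, everything is formal. I would first settle uniqueness. Any effective divisor with support contained in $|\partial X|$ is of the form $\sum_{i=1}^{r}a_iD_i$ with $a_i\ge 0$, and the classes $[D_1],\dots,[D_r]$ are linearly independent in $\Pic(X)$: the relations among the toric divisors $D_0,\dots,D_{r+1}$ are generated by $\Div(t_1)=\sum_i\langle(1,0),\eta_i\rangle D_i$ and $\Div(t_2)=\sum_i\langle(0,1),\eta_i\rangle D_i$, and since $\eta_0=(0,1)$, $\eta_{r+1}=(1,0)$, the coefficients of these two relations on $D_0$ are $0$ and $1$ and those on $D_{r+1}$ are $1$ and $0$; hence no nonzero combination of them is supported on $D_1+\dots+D_r$. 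Consequently a linear equivalence class contains at most one effective divisor supported on $|\partial X|$.

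For existence, recall that $\mathcal{O}_X(C)$ is the globally generated toric line bundle $\mathcal{L}_{N_f}$ attached to $N_f$ — this follows from $C=\pi^{*}C_f$ (the resolution $\pi$ being an isomorphism near $C_f$ since $C_f\cap S=\emptyset$, so $C$ has no exceptional component), the classical description of the class of the Zariski closure of $\{f=0\}$ in $X_f$ (see \cite{Kho:gnus}), and the invariance of the defining polytope under proper toric pull-back of a globally generated line bundle. Writing $\mathcal{L}_{N_f}=\mathcal{O}_X(E)$ with $E=\sum_{i=0}^{r+1}b_iD_i$ and $b_i=-\min\{\langle m,\eta_i\rangle:\ m\in N_f\}$, hypothesis $(H_2)$ forces $b_0=b_{r+1}=0$, because $0\in N_f\subset\rp^2_{\ge0}$ makes the minima of $m_1$ and $m_2$ over $N_f$ equal to $0$. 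Therefore $C+\partial X\sim\sum_{i=1}^{r}(b_i+1)D_i=:D$, which is effective with $|D|=|\partial X|$; together with the uniqueness step this establishes the first two assertions.

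It remains to compute the polytope. Using $-K_X=D_0+\dots+D_{r+1}$ one gets $D+K_X=-D_0-D_{r+1}+\sum_{i=1}^{r}b_iD_i$, so by the definition of $P_{D+K_X}$ and $\eta_0=(0,1)$, $\eta_{r+1}=(1,0)$,
$$
P_{D+K_X}=\bigl\{\,m\in\rp^2:\ m_1\ge1,\ m_2\ge1,\ \langle m,\eta_i\rangle\ge -b_i\ \ (1\le i\le r)\,\bigr\}.
$$
Since the fan $\Sigma$ refines the normal fan of $N_f$ by construction of $\Sigma_f$, one has $N_f=\{m:\ \langle m,\eta_i\rangle\ge -b_i,\ 0\le i\le r+1\}$, where the inequalities for $i=0$ and $i=r+1$ read $m_2\ge0$ and $m_1\ge0$. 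Hence $P_{D+K_X}=N_f\cap\{m_1\ge1\}\cap\{m_2\ge1\}$, and intersecting with $\zp^2$ (using $N_f\subset\rp^2_{\ge0}$) gives $P_{D+K_X}\cap\zp^2=N_f\cap(\np^*)^2$. The only non-formal ingredient is the identification $\mathcal{O}_X(C)\simeq\mathcal{L}_{N_f}$, equivalently the computation of the intersection numbers $C\cdot D_i$; this is standard toric geometry, so I do not anticipate a real obstacle.
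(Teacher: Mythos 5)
Your proof is correct and follows essentially the same route as the paper: $D=\Div_\infty(f)+\partial X=\sum_{i=1}^r(k_i+1)D_i$ with $k_i=-\min_{m\in N_f}\langle m,\eta_i\rangle$, uniqueness from the triviality of rational functions with divisor supported on $|\partial X|$ (your linear-independence of $[D_1],\dots,[D_r]$ via the two toric relations is the same fact as the paper's ``$h$ is constant on $X\setminus|\partial X|\simeq\cp^2$''), and the polytope identity from $P_{\Div_\infty(f)}=N_f$, which both arguments deduce from $(H_2)$ together with the fact that $\Sigma$ refines the normal fan of $N_f$. The only cosmetic difference is that you identify $\mathcal{O}_X(C)\simeq\mathcal{L}_{N_f}$ by pulling back from $X_f$, whereas the paper computes $\Div_\infty(f)$ directly in the affine charts of $X$; both are standard and valid.
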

\vskip1mm
\noindent

\begin{proof}
The polynomial $f$ extends to a rational function on $X$ whose polar divisor 
$\Div_{\infty}(f)$ 
is  supported by $|\partial X|$. Thus, the divisor 
$$
D:=\Div_{\infty}(f)+\partial X
$$
is effective, with support $|\partial X|$, and rationally equivalent to $C+\partial X$. If $D'$ is an other candidate, then
$D-D'=\Div(h)$ for some $h\in\cp(X)$ with no poles nor zeroes outside $|\partial X|$. Since $X\setminus |\partial X|\simeq \cp^2$, the rational function $h$ is necessarily constant and $D=D'$. 
By using affine charts, we check easily that the polar divisor of $f$ is equal to 
$$
\Div_{\infty}(f)=\sum_{i=1}^r k_i D_i,\quad k_i=-\min_{m\in N_f}\langle m,\eta_i\rangle.
$$
By $(7)$, it follows that $N_f$ is contained in the polytope $P_G$ of $G=\Div_{\infty}(f)$ and intersects each face of $P_G$. Since the fan of $X$ refines noth normal fans of  $G$ and $N_f$ (we use here hypothesis $(H_2)$), it follows that
$
P_G=N_f.
$
Then, equality $D+K_X=G-D_0-D_{r+1}$ implies that
$$
P_{D+K_X}=\{m\in P_G,\,\langle m,\eta_0\rangle \ge 1,\,\,\langle m,\eta_{r+1}\rangle \ge 1\}=P_G\cap (\np^*)^2.
$$
\end{proof}

\vskip2mm
\noindent

\subsection{Computing the Newton polytopes of the absolute factors}
We show here how to recover the Newton polytopes of the irreducible absolute factors of $N_f$. 

We recall that the Newton polytope of a factor of $f$ is necessarily a Minkowski summand of $N_f$.
For a general polytope $Q$, we denote by $Q^{(i)}$ the set of $m\in Q$ for which the scalar product $\langle  m,\eta_i\rangle$ is minimal. 

We obtain the following 

\vskip6mm

\begin{theo} 
1. Let $f\in K[t_1,t_2]$ which satisfies $(H_2)$. Let $\pi$, $X$ and $C$ be as before and denote by 
$\gamma$ the restriction of $C$ to the divisor $D$ of Lemma $3$. Let $Q$ be a lattice Minkowski summand of $N_f$. 

There exists an absolute factor $q$ of $f$ with Newton polytope $Q$ if and only if there exists $\gamma'\le \gamma$ an effective Cartier divisor on $D$ with 
\begin{eqnarray}
\deg(\gamma' \cdot D_i)=\Card(Q^{(i)}\cap \zp^2)-1,\,\,i=1,\ldots,r
\end{eqnarray}
and such that $(2)$ holds for the osculation data $(D,\gamma')$ on the boundary of $X$. The polynomial $q$ can be recovered from $\gamma'$.
\end{theo}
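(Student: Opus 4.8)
We sketch the proof; it splits into the two implications of the stated equivalence. For a lattice polytope $P\subseteq\rp^2$ write $G_P:=\sum_{i=1}^r\big({-}\min_{m\in P}\langle m,\eta_i\rangle\big)D_i$; when the normal fan of $P$ is refined by $\Sigma$ (which holds for $N_f$ and for its Minkowski summands) $G_P$ is globally generated, $(7)$ gives $H^0(X,\mathcal O_X(G_P))=\bigoplus_{m\in P\cap\zp^2}\cp\,t^m$, and $G_P\cdot D_i=\Card(P^{(i)}\cap\zp^2)-1$. Since $(H_2)$ forces $N_f$, and hence every lattice Minkowski summand together with its complement, to contain the origin, we write $N_f=Q+Q'$ with $Q,Q'$ lattice polytopes through $0$; because $N_f$ and $Q$ are integral, $G_Q:=G_{N_f}-G_{Q'}$ and $G_{Q'}$ are honest effective divisors supported on $|\partial X|$, and by Lemma $3$ one has $G_{N_f}=\Div_\infty(f)$ and $\mathcal O_X(C)\simeq\mathcal O_X(G_{N_f})$.

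\emph{Only if.} Suppose $f=qq'$ with $N_q=Q$. Put $C'=\pi^{-1}(\overline{\{q=0\}})$, $C''=\pi^{-1}(\overline{\{q'=0\}})$ in $X$; since the curve of $f$ avoids the singular locus of $X_f$ these are strict transforms, effective, with $C=C'+C''$. Then $\gamma':=i^*C'$ is effective and $0\le\gamma'\le\gamma$ (restrict $C=C'+C''$ to $D$), while $\deg(\gamma'\cdot D_i)=C'\cdot D_i=G_{N_q}\cdot D_i=\Card(Q^{(i)}\cap\zp^2)-1$: indeed $C'\cdot D_i$ is the intersection of the curve of $q$ with $D_i$, which equals the lattice length of the edge $Q^{(i)}$ of $N_q=Q$ because the $i$-th exterior facet polynomial of $q$ is, in monomial coordinates, a univariate polynomial whose extreme coefficients are vertex coefficients of $q$ and hence nonzero. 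Finally $(2)$ holds for $(D,\gamma')$ by Theorem $1$, since $\gamma'=i^*C'$ extends to the Cartier divisor $C'$ on $X$.

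\emph{If.} Suppose $\gamma'\le\gamma$ is effective, satisfies the degree conditions, and $(2)$ holds for $(D,\gamma')$. First one checks that $\gamma'':=\gamma-\gamma'\ge0$ also satisfies $(2)$: the residue sum in $(2)$ is additive in the divisor argument and vanishes for $\gamma$ (as $\gamma=i^*C$ extends to $C$) and for $\gamma'$; moreover $\gamma''$ satisfies the degree conditions with $Q'$ in place of $Q$, because $\deg(\gamma\cdot D_i)=C\cdot D_i=\ell(N_f^{(i)})$ splits as $\ell(Q^{(i)})+\ell(Q'^{(i)})$ along $N_f^{(i)}=Q^{(i)}+Q'^{(i)}$. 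By the first assertion of Theorem $1$ there are Cartier divisors $E',E''$ on $X$, unique up to rational equivalence, with $i^*E'=\gamma'$, $i^*E''=\gamma''$; their multidegrees on $D_1,\dots,D_r$ are prescribed by the degree conditions and equal those of $G_Q$, $G_{Q'}$, and since $|\partial X|$ is a connected, simply connected tree of rational curves (proof of Corollary $1$) a line bundle on $\partial X$ is determined by its multidegree, so $\mathcal O_X(E')|_{\partial X}\simeq\mathcal O_X(G_Q)|_{\partial X}$ and likewise for $E''$; Corollary $1$ then gives $\mathcal O_X(E')\simeq\mathcal O_X(G_Q)$ and $\mathcal O_X(E'')\simeq\mathcal O_X(G_{Q'})$. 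To promote $E',E''$ to effective divisors I would apply the second assertion of Theorem $1$: since $E'-D\sim G_Q-D=-(G_{Q'}+\partial X)$ and symmetrically, this requires the vanishing $H^1(X,\mathcal O_X(G_Q-D))=H^1(X,\mathcal O_X(G_{Q'}-D))=0$, which is the hard point — I expect to obtain it from the combinatorial description of cohomology of line bundles on a smooth complete toric surface, checking that for every $m\in\zp^2$ the set of maximal cones of $\Sigma$ satisfying the relevant support inequalities is connected, a convexity property of $N_f=Q+Q'$ relative to the rays of $\Sigma$ (the elementary case $H^1(X,\mathcal O_X(-\partial X))=0$, which makes $C$ an effective osculating divisor for $\gamma$, and the case $Q=\{0\}$, which gives $H^1(X,\mathcal O_X(-D))=0$, i.e. $H^0(D,\mathcal O_D)=\cp$, are instances of this family). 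Granting it, Theorem $1$ produces effective $C'\sim E'$, $C''\sim E''$ with $i^*C'=\gamma'$, $i^*C''=\gamma''$, $\mathcal O_X(C')\simeq\mathcal O_X(G_Q)$, $\mathcal O_X(C'')\simeq\mathcal O_X(G_{Q'})$. Because $\gamma',\gamma''$ are honest Cartier divisors on $D$ the curves $C',C''$ meet $|\partial X|$ properly; then $\mathcal O_X(C'+C'')\simeq\mathcal O_X(C)$ and $i^*(C'+C'')=\gamma=i^*C$, together with the injectivity of $H^0(X,\mathcal O_X(C))\to H^0(D,\mathcal O_D(\gamma))$ (kernel $H^0(X,\mathcal O_X(-\partial X))=0$) and $H^0(D,\mathcal O_D)=\cp$, force $C'+C''=C$, whence $C'\le C$ and $C'$ is the total transform of the curve of a polynomial factor $q$ of $f$. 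Moreover $N_q=Q$, since $N_q$ and $Q$ are Minkowski summands of $N_f$ with equal edge lengths in every facet direction of $N_f$ (read from $\mathcal O_X(C')\simeq\mathcal O_X(G_Q)$), hence translates, hence equal as both contain $0$. Finally, by $(7)$, $q$ is the unique element of the $\Card(Q\cap\zp^2)$-dimensional space $\bigoplus_{m\in Q\cap\zp^2}\cp\,t^m$ whose restriction to $D$ cuts out $\gamma'$ (the restriction map is injective since $H^0(X,\mathcal O_X(-(G_{Q'}+\partial X)))=0$); writing this condition out — matching, at each point of $|\gamma'\cdot\partial X|$, the prescribed jet along $D$, together with the boundary multidegrees — gives the asserted sparse linear system.

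Conceptually the mechanism is the familiar one that two curves of bounded degree with sufficiently large contact along a finite set must share a component (here contact is forced by $\gamma'\le\gamma$); concretely, the single non-routine ingredient is the toric vanishing $H^1(X,\mathcal O_X(G_Q-D))=0$, while the rest is an assembly of Theorem $1$, Corollary $1$, Lemma $3$ and the toric dictionary $(7)$, the only delicate bookkeeping being the interplay between the thickening $D$ and its reduction $\partial X$ (which becomes transparent when $f$ satisfies $(H_1)$, so that $\gamma$ and $\gamma'$ arise from reduced curves transversal to $\partial X$).
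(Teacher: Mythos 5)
Your ``only if'' direction and the overall architecture (translating the Minkowski summand $Q$ into the nef toric divisor $G_Q$, matching multidegrees along $\partial X$, and invoking Corollary $1$ to identify the osculating line bundle with $\mathcal{O}_X(G_Q)$) coincide with the paper's. The genuine gap is in the ``if'' direction, at exactly the point you flag as ``the hard point'': you need $H^1(X,\mathcal{O}_X(G_Q-D))=H^1(X,\mathcal{O}_X(-(G_{Q'}+\partial X)))=0$ in order to apply the second assertion of Theorem $1$ with the full thickening $D$, and you leave this unproved. The paper states explicitly that this group ``does not necessarily vanish'': since $X$ is rational, $H^1(X,\mathcal{O}_X(-A))\simeq H^0(A,\mathcal{O}_A)/\cp$ for $A$ effective supported on $|\partial X|$, and for the non-reduced curve $A=G_{Q'}+\partial X$, whose support contains exceptional components of negative self-intersection introduced by the resolution $\pi$, the space of global functions can jump. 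Your appeal to ``a convexity property of $N_f=Q+Q'$ relative to the rays of $\Sigma$'' is a hope, not a proof, and the author's remark indicates it fails in general. Note that the same unproved vanishing in the case $Q=\{0\}$, i.e. $H^0(D,\mathcal{O}_D)=\cp$, is also what you use later to conclude $C'+C''=C$ from the equality of restricted divisors, so the gap propagates through the end of your argument.

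The paper's way around this is the step you are missing: replace $D$ by the smaller thickening $D_Q:=G_Q+\partial X\le D$, for which the relevant obstruction is $H^1(X,\mathcal{O}_X(G_Q-D_Q))=H^1(X,\mathcal{O}_X(-\partial X))=0$ because $\partial X$ is reduced and connected; this produces an effective $C'\in|\mathcal{O}_X(G_Q)|$ with $C'_{|D_Q}=\gamma'_{|D_Q}$. Two further arguments are then needed that your proposal does not supply: (i) that $C'$ in fact restricts to $\gamma'$ on all of $D$, which the paper obtains by lifting once more over $D+D_0+D_{r+1}=G-K_X$ and using Serre duality together with the vanishing of $H^1$ of the globally generated bundle $\mathcal{O}_X(G_P)$ on a toric surface; and (ii) that $C'\le C$, which the paper proves by the intersection-multiplicity estimate $\mult_p(C_0,C)\ge\mult_p(C_0,D)$ for a putative component $C_0$ of $C'$ not contained in $C$, leading to $C_0\cdot\partial X<0$, a contradiction --- an argument that does not pass through your auxiliary divisor $C''$ at all. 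Your additivity observation (that $\gamma''=\gamma-\gamma'$ also satisfies the residue conditions and the complementary degree conditions) is correct but ends up unnecessary in the paper's route.
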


\vskip4mm

Note that $Q^{(i)}$ is reduced to a point and $Card(Q^{(i)}\cap \zp^2)-1=0$ as soon as $\rho_i$ is not an exterior ray of $N_f$.

\vskip2mm

\subsubsection*{Proof} A Minkowski sum decomposition $N_f=P+Q$ corresponds to a line bundle decompostion
$$
\mathcal{O}_X(G)=\mathcal{O}_X(G_P)\otimes \mathcal{O}_X(G_Q),
$$
where $G$, $G_P$ and $G_Q$ are the polar divisors of the rational functions determined by polynomials with respective polytope $N_f$, $P$ and $Q$. Note that $G=G_P+G_Q$. Since $\Sigma$ refines the normal fan of each polytope, we have equalities $N_f=P_G$, $Q=P_{G_Q}$ and $P=P_{G_P}$ (see Lemma $3$) and all involved line bundles are globally generated over $X$ (see \cite{F:gnus}). 

A factor of $f$ with Newton polytope $Q$ corresponds to a divisor $C'\le C$ lying in the complete linear system $|\mathcal{O}_X(G_Q)|$. It defines a Cartier divisor $\gamma'=C_{|D}'\le C_{|D} = \gamma$ on $D$. Thus, conditions $(2)$ hold for the osculation data $(D,\gamma')$ and
\begin{eqnarray*}
\deg(\gamma'\cdot D_i)= \deg(C'\cdot D_i)  =  \deg(\mathcal{O}_X(G_Q)_{|D_i})
\end{eqnarray*}
for all $i=1,\ldots,r$. Since $\mathcal{O}_X(G_Q)$ is globally generated over $X$, toric intersection theory gives equalities
$$
\deg(\mathcal{O}_X(G_Q)_{|D_i}) = \Card(Q^{(i)}\cap \zp^2)-1
$$
for all $i=1,\ldots,r$. This shows necessity of conditions. 

Suppose now that $(D,\gamma')$ satisfies $(12)$ and $(2)$ and let $F$ be an osculating divisor asociated to that data. We deduce equalities
$$
\deg(\mathcal{O}_X(F)_{|D_i})=\deg(\gamma'\cdot D_i)=\deg(\mathcal{O}_X(G_Q)_{|D_i})
$$
for all $i=1,\ldots,r$. Thus, there is an isomorphism
$$
\mathcal{O}_{\partial X}(F)\simeq \mathcal{O}_{\partial X}(G_Q)
$$
and $\mathcal{O}_{X}(F)\simeq \mathcal{O}_X(G_Q)$ by Corollary $1$ of Subsection $2.4$. Finally, $(2)$ and $(12)$ hold for $(D,\gamma')$ if and only if 
$
\mathcal{O}_{D}(\gamma')\simeq \mathcal{O}_D(G_Q).
$
Let us show that there exists in such a case an \textit{effective} osculating divisor. The problem is that $H^1(X,\mathcal{O}_X(G_Q-D))$ does not necessarily vanish. To avoid this difficulty, we introduce the effective divisor 
$$
D_Q:=G_Q+\partial X \le G+\partial X = D.
$$
Then $\mathcal{O}_{D_Q}(G_Q)\simeq \mathcal{O}_{D_Q}(\gamma'_{|D_Q})$ and
$
H^1(X,\mathcal{O}_X(G_Q-D_Q))\simeq H^1(X,\mathcal{O}_X(-\partial X)).
$
Since $\partial X$ is reduced and connected, the restriction map 
$H^0(\mathcal{O}_X)\rightarrow H^0(\mathcal{O}_{\partial X})$ is an isomorphism and we deduce equality
$
H^1(X,\mathcal{O}_X(-\partial X))=0.
$
By Theorem $1$, there thus exists $C'\in |\mathcal{O}_X(G_Q)|$ with 
$$
C'_{|D_Q}=\gamma'_{|D_Q}.
$$
Such a curve has  proper intersection with the remaining toric divisors $D_0$ and $D_{r+1}$ (otherwise the support of $\gamma'$ would contain a torus fixed point). We deduce that there exists a unique Cartier divisor $\gamma_0$ on $D+D_0+D_{r+1}$ so that
$$
\gamma_{0|D}=\gamma'\quad{\rm and}\quad \gamma_{0|D_0+D_{r+1}}=C'\cdot D_0+C'\cdot D_{r+1}
$$
We have equality $D+D_0+D_{r+1}=G-K_X$ and we obtain
\begin{eqnarray*}
H^1(X,\mathcal{O}_X(G_Q-D-D_0-D_{r+1})& \simeq & H^1(X,\mathcal{O}_X(G_Q-G+K_X)\\
&\simeq & H^1(X,\mathcal{O}_X(G-G_Q))^{\check{}}\\
&\simeq & H^1(X,\mathcal{O}_X(G_P))^{\check{}}=0.
\end{eqnarray*}
The second isomorphism is Serre Duality, and the last equality comes from the fact that the $H^1$ of a globally generated line bundle on a toric surface vanishes (see \cite{F:gnus}). Last equality combined with the short exact sequence
$$
0\rightarrow \mathcal{O}_X(G_Q-D-D_0-D_{r+1})\rightarrow \mathcal{O}_X(G_Q)\rightarrow \mathcal{O}_{D+D_0+D_{r+1}}(\gamma_0)\rightarrow 0,
$$
implies that there exists $C''\in |\mathcal{O}_X(G_Q)|$ which restricts to $\gamma_0$ on $D+D_0+D_{r+1}$. A fortiori, $C''$ restricts to $\gamma'$ on $D$. Since $G_Q-D_Q= -\partial X< 0$, we have $H^0(X, \mathcal{O}_X(G_Q-D_Q))=0$. Thus $C'=C''$, the two curves having the same restriction to $D_Q$. 

There remains to show that $C'\le C$. Suppose that there exists an irreducible component $C_0$ of $C'$ with proper intersection with $C$. Then, we can compute the intersection multiplicity of $C_0$ and $C$ at any $p\in X$. It is given by
$$
\mult_p(C_0,C)=\dim_{\cp}\frac{\mathcal{O}_{X,p}}{(f_p,g_p)}
$$
where $f_p$ and $g_p$ are respective local equations for $C$ and $C_0$ at $p$. By assumption, 
$$
C_{0|D}\le C_{|D}
$$
so that the class of $g_p$ in $\mathcal{O}_{D,p}$ divides that of $f_p$. We deduce that
$$
f_p=u_p g_p+ v_p h_p
$$
for some holomorphic functions $u_p$, $v_p$, where $h_p$ is a local equation for $D$. Thus, we obtain inequality
$$
\dim_{\cp}\frac{\mathcal{O}_{X,p}}{(f_p,g_p)}\ge \dim_{\cp}\frac{\mathcal{O}_{X,p}}{(g_p,h_p)} = \mult_{p}(C_0,D),
$$
for all $p$ in the support of $D$. Finally,
$$
\deg(\mathcal{O}_X(C_0))_{|C} \ge \sum_{p\in|D|} \mult_p(C_0,C)\ge \sum_{p\in|D|} \mult_p(C_0,D)= \deg(\mathcal{O}_X(C_0))_{|D}.
$$
Since $D$ is rationally equivalent to $C+\partial X$, this implies that $C_0$ has a negative intersection with the boundary. This leads to a contradiction. Thus $C_0$ is necessarily an irreducible component of $C$. Finally we have $C'\le C$, corresponding to an absolute factor $q$ of $f$ with Newton polytope $Q$. 

Since $G_Q-D_Q<0$, the restriction $H^0(\mathcal{O}_X(G_Q))\rightarrow H^0(\mathcal{O}_{D_Q}(G_Q))$ is injective. Thus $C'\in|\mathcal{O}_X(G_Q)|$ can be uniquely recovered from its restriction $\gamma'_{|D_Q}$ to $D_Q$ (and a fortiori from $\gamma'$). $\hfill{\square}$

\vskip2mm
\noindent

\begin{rema}
If we only assume that $N_f$ intersects both coordinate axes, the osculating criterions still permit to detect an absolute factor $q$ of $f$ associated to the choice of a summand $Q$ of $N_f$. Nevertheless, although the Newton polytope $N_q$ of $q$ still has the same exterior facets of $Q$, we can not conclude that $N_q=Q$. 
\end{rema}

\vskip0mm
\noindent

\subsection{Computing the absolute factors of $f$.}

Suppose that $\gamma'\le \gamma$ satisfies conditions of Theorem $2$ for a  Minkowski summand $Q$ of $N_f$. We give here an efficient way to compute the corresponding absolute factor $q$ of $f$. 
The polynomial $q$ admits the monomial $\cp$-expansion
$$
q(t)=\sum_{m\in Q\cap\np^2} a_m t^m,
$$
and we look for the homogeneous class 
$
[a]\in \pp^{\Card(Q\cap\np^2)-1}(\cp)
$
of $a=(a_m)_{m\in Q\cap\np^2}$.
\vskip2mm
\noindent

We denote by $\Gamma'=\gamma'\cdot \partial X$ and by $\Gamma'_i=\Gamma'\cdot D_i$. Let us fix $p\in |\Gamma'_i|$. 
If $f$ satisfies the hypothesis $(H_1)$, the germ of $C$ at $p$ has the Weirstrass equation 
$$
y_i-\phi_p(x_i)=0,
$$
and we can define
$$
\alpha_p(u,v):=\frac{\partial^{u}\phi_{p}^{v}}{\partial x_i^{u}}(0)\quad{\rm if}\,\,u\ge 0, \qquad 
\alpha_p(u,v):=0\quad {\rm if}\,\,u<0,
$$
for all integers $u$, $v$ (recall that $\phi_p(0)\ne 0$). For $k\in \np$ and $m\in \zp^2$, we then define the complex number
$$
\beta_p(k,m):=\alpha_p(k-\langle m,\eta_i\rangle-e_i,\langle m,\eta_{i+1}\rangle+e_{i+1}),
$$
\vskip1mm
\noindent
where $e_i:=- \min_{m\in Q} \langle m,\eta_i\rangle$. We denote by $\Vol(Q)$ the euclidean volume of $Q$ in $\rp^2$. 
We obtain the following

\vskip3mm
\noindent
\begin{prop}
Suppose that $f$ satisfies $(H_1)$ and $(H_2)$. The homogeneous vector $[a]$ is uniquely determined by conditions
\begin{eqnarray*}
\sum_{m\in Q \cap \zp^2} a_m \beta_p(k,m)=0,
\end{eqnarray*}
for all $p\in|\Gamma'_i|$, all $0\le k\le  e_i$ and all $i=1,\ldots,r$. 
The underlying linear system $(S_{\gamma'})$ contains $2\Vol(Q)+\deg(\Gamma')$ equations of $\Card(Q\cap \zp^2)$ unknowns and only depends on the restriction of $\gamma'$ to the divisor $G_Q+\partial X$. 
\end{prop}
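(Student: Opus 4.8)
The plan is to identify $q$ with the section of $\mathcal{O}_X(G_Q)$ singled out by Theorem $2$, and then to write the restriction condition of that theorem explicitly in the affine charts $U_i$.

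Recall that, under $(H_1)$--$(H_2)$ and the standing hypotheses on $\gamma'$, the proof of Theorem $2$ produces a unique curve $C'\in|\mathcal{O}_X(G_Q)|$ with $C'\le C$ and $C'_{|D_Q}=\gamma'_{|D_Q}$, and that $C'=\Div_0(q)$ for the absolute factor $q$ with Newton polytope $Q$; here $D_Q=G_Q+\partial X$, and uniqueness comes from $G_Q-D_Q=-\partial X<0$, which makes the restriction $r\colon H^0(X,\mathcal{O}_X(G_Q))\to H^0(D_Q,\mathcal{O}_{D_Q}(G_Q))$ injective. Since $q\mid f$ and $f(0)\ne 0$ we get $q(0)\ne 0$, hence $0\in Q\cap\np^2$, and via $(7)$ we write $q=\sum_{m\in Q\cap\zp^2}a_m\,t^m$; the task is to pin down $[a]$.

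Next I would make $C'_{|D_Q}=\gamma'_{|D_Q}$ explicit chart by chart. Fix $p\in|\Gamma'_i|$ and work in $U_i$; as $p$ is no torus fixed point, $y_i$ is a unit near $p$, $q$ has a pole of order at most $e_i$ along $D_i=\{x_i=0\}$, and
$$
\wt{g}_p:=y_i^{\,e_{i+1}}x_i^{\,e_i}\,q_{|U_i}\;=\;\sum_{m\in Q\cap\zp^2}a_m\,x_i^{\,e_i+\langle m,\eta_i\rangle}\,y_i^{\,e_{i+1}+\langle m,\eta_{i+1}\rangle}
$$
is a holomorphic local equation of $\Div_0(q)$ at $p$ (all exponents are $\ge 0$ by the definition of $e_i,e_{i+1}$). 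By $(H_1)$ the germ of $C$, hence of $C'\le C$, at $p$ is the smooth transversal $\{y_i=\phi_p(x_i)\}$, and $D_Q$ has local equation $x_i^{\,e_i+1}$ there; so the condition at $p$ is that $\wt{g}_p$ vanish on the length $e_i+1$ subscheme $\{y_i=\phi_p(x_i)\}\cap\{x_i^{\,e_i+1}=0\}$, i.e. $\wt{g}_p(x_i,\phi_p(x_i))\equiv 0\pmod{x_i^{\,e_i+1}}$. Equating to zero the coefficients of $x_i^{\,k}$ for $k=0,\dots,e_i$ and inserting the expansion of $\wt{g}_p$ yields, once the Taylor coefficients at $0$ of the powers $\phi_p^{\,v}$ are recognized as the numbers $\alpha_p$, exactly the equations $\sum_{m\in Q\cap\zp^2}a_m\,\beta_p(k,m)=0$; this is a direct computation, which also exhibits that $\phi_p$ intervenes only modulo $x_i^{\,e_i+1}$, so $(S_{\gamma'})$ depends only on $\gamma'_{|G_Q+\partial X}$. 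For the size: there are $\sum_{i=1}^r(e_i+1)|\Gamma'_i|$ equations, and by $(12)$ together with toric intersection theory $|\Gamma'_i|=\deg(\gamma'\cdot D_i)=\mathcal{O}_X(G_Q)\cdot D_i$, whence $\sum_i|\Gamma'_i|=\deg(\Gamma')$ and $\sum_i e_i\,|\Gamma'_i|=G_Q\cdot G_Q=2\Vol(Q)$; so the system has $2\Vol(Q)+\deg(\Gamma')$ equations and $\Card(Q\cap\zp^2)$ unknowns.

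It remains to see the solution space of $(S_{\gamma'})$ is the line $\cp\,q$. By the previous step $q$ is a solution. Conversely, let $a$ solve $(S_{\gamma'})$, set $q_a=\sum a_m t^m$, $C'_a=\Div_0(q_a)$ and $G':=\Div_\infty(q_a)\le G_Q$, so $C'_a\sim G'$ with $G'$ supported on $\partial X$. One first checks, using $(12)$, that $C'_a$ meets $\partial X$ only along $|\Gamma'|$ and there with the multiplicities dictated by $G'$, so that $C'_a$ is an osculating divisor for the data $(G'+\partial X,\gamma'_{|G'+\partial X})$. Since $(12)$ forces $\mathcal{O}_{\partial X}(\Gamma')\simeq\mathcal{O}_{\partial X}(G_Q)$, Corollary $1$ identifies the extension class of this boundary bundle with $\mathcal{O}_X(G_Q)$, and the uniqueness‑up‑to‑rational‑equivalence clause of Theorem $1$ gives $C'_a\sim G_Q$; as $G'$ and $G_Q$ are effective, $\partial X$‑supported and rationally equivalent, $G'=G_Q$. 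Thus $C'_a\in|\mathcal{O}_X(G_Q)|$, it meets $\partial X$ transversally exactly along $|\Gamma'|$, so $C'_{a|D_Q}=\gamma'_{|D_Q}$, and injectivity of $r$ yields $q_a\in\cp\,q$. I expect this last part—excluding solutions of $(S_{\gamma'})$ whose Newton polytope is a proper summand of $Q$—to be the main obstacle; it is handled, as indicated, by combining the degree constraint $(12)$ with Corollary $1$ and Theorem $1$'s uniqueness clause.
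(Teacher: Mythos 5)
Your derivation of the equations and the count of $2\Vol(Q)+\deg(\Gamma')$ equations match the paper's proof: the paper likewise writes the affine equation $q_i=x_i^{e_i}y_i^{e_{i+1}}q_{|U_i}$ in the chart $U_i$, uses $q_i(x_i,\phi_p(x_i))\equiv 0$ to produce the relations, and computes the size via $\sum_i(e_i+1)\deg(C'\cdot D_i)=\deg(C'\cdot G_Q)+\deg(\Gamma')=2\Vol(Q)+\deg(\Gamma')$. The uniqueness step is where you diverge, and it is also where your argument has a gap. The pivotal sentence ``one first checks, using $(12)$, that $C'_a$ meets $\partial X$ only along $|\Gamma'|$ and there with the multiplicities dictated by $G'$'' is asserted, not proved, and $(12)$ alone cannot give it: $(12)$ only yields the lower bound $C'_a\cdot D_i\ge \Gamma'_i$ (each $p\in|\Gamma'_i|$ lies on $C'_a$ by the $k=0$ equations), hence $\deg(C'_a\cdot D_i)\ge\deg(\mathcal{O}_X(G_Q)_{|D_i})$. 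To exclude extra intersection points, and to get transversality along $|\Gamma'|$, you also need the reverse inequality $\deg(C'_a\cdot D_i)\le\deg(\mathcal{O}_X(G_Q)_{|D_i})$, which must be extracted from $N_{q_a}\subseteq Q$; note that your observation $G'\le G_Q$ does \emph{not} imply $\mathcal{O}_X(G')\cdot D_i\le\mathcal{O}_X(G_Q)\cdot D_i$ componentwise, since the $D_i$ may have negative self-intersection on the resolution $X$. Without this, the subsequent identification $\mathcal{O}_{\partial X}(C'_a)\simeq\mathcal{O}_{\partial X}(G_Q)$ and the appeal to Corollary $1$ do not get off the ground.

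For comparison, the paper avoids the detour through Theorem $1$ and Corollary $1$ entirely. From the vanishing of the derivatives it invokes the (local) duality theorem to place the germ of $\wt q_i$ at $p$ in the ideal $(y_i-\phi_p,x_i^{e_i+1})$, which gives the inequality of Cartier divisors $\wt C_{|\sum(e_i+1)D_i}\ge C'_{|\sum(e_i+1)D_i}$ on the non-reduced scheme $D_Q=G_Q+\partial X$; the degree bound $\deg(\wt C\cdot D_i)\le\deg(C'\cdot D_i)$ coming from $N_{\wt q}\subseteq Q$ then forces equality of these restrictions, and injectivity of $H^0(\mathcal{O}_X(G_Q))\to H^0(\mathcal{O}_{D_Q}(G_Q))$ concludes. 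So the same two ingredients (local ideal membership plus the polytope-containment degree bound) do all the work, but applied directly to the restrictions to $D_Q$ rather than to the identification of the rational equivalence class of $C'_a$. If you make the degree bound from $N_{q_a}\subseteq Q$ explicit, your route can be completed, but it is longer than necessary; I would recommend restructuring the uniqueness step along the paper's lines.
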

\vskip3mm
\noindent

\begin{proof}
By \cite{Danilov:gnus}, the curve $C'$ of $q$ has affine polynomial equation $q_i=0$ in the chart $U_i=\Spec\cp[x_i,y_i]$, where
\begin{eqnarray*}
q_i(x_i,y_i)=\sum_{m\in Q \cap \zp^2} a_m x_i^{\langle m,\eta_i\rangle+e_i}y_i^{\langle m,\eta_{i+1}\rangle+e_{i+1}}.
\end{eqnarray*}
Let $p\in|\Gamma'_i|$. Since the germ of $C'$ at $p$ is contained in that of $C$ it follows that
$$
q_i(x_i,\phi_p(x_i))\equiv 0
$$ 
It's easy to show that the $k^{th}$-derivative of this expression evaluated at $0$ is equal to $c\times\sum_{m\in Q \cap \zp^2} a_m \beta_p(k,m)$ for a non zero scalar $c$. Thus the coefficients of $[a]$ determine a non trivial solution of $(S_{\gamma'})$. 

If $\wt{a}=(\wt{a}_m)_{m\in Q\cap \zp^2}$ is an other non trivial solution of $(S_{\gamma'})$, the polynomial 
$
\wt{q}(t)=\sum_{m\in Q\cap \zp^2} \wt{a}_m t^m
$
satisfies
$$
\Big[\frac{\partial^{k}}{\partial x_i^{k}}  \wt{q}_i(x_i,\phi_p(x_i))\Big]_{x_i=0}=0
$$ 
for all $p\in|\Gamma'_i|$, all $0\le k\le e_i$ and all $i=1,\ldots,r$. 
By the duality theorem, this forces the germ of $\wt{q}_{i}$ at $p$ to belong to the ideal $(y_i-\phi_p(x_i),x_i^{e_i+1})$. It follows that the curve $\wt{C}\subset X$ of $\wt{q}$ satisfies
$$
\wt{C}_{|\sum (e_i+1)D_i}\ge C'_{|\sum (e_i+1)D_i}.
$$
\vskip1mm
\noindent
Since $N_{\wt{q}}\subset Q=N_q$, it follows that $deg(\wt{C}\cdot D_i)\le deg(C'\cdot D_i)$ for all $i$. Combined with previous inequality, this forces equality 
$$
C'_{|\sum (e_i+1)D_i}= \wt{C}_{|\sum (e_i+1)D_i}.
$$
We have chosen the $e_i$'s in order to that 
$$
\sum_{i=1}^{r} (e_i+1)D_i =G_Q+\partial X,
$$
and both curves $C'$ and $\wt{C}$ belong to the linear system $|\mathcal{O}_X(G_Q)|$. The restriction map $H^0(X,\mathcal{O}_X(G_Q))\rightarrow H^0(X,\mathcal{O}_{G_Q+\partial X}(G_Q))$ being injective, the previous equality forces $C'=\wt{C}$. It follows that $\wt{q}=cq$ for some $c\in\cp^*$ and $[a]=[\wt{a}]$.
\vskip1mm
\noindent

By construction, the linear system $(S_{\gamma'})$ only depends on the restriction of $\gamma'$ to the divisor $G_Q+\partial X< G+\partial X=D$. It contains precisely
$$
\sum_{i=1}^r \sum_{p\in |\Gamma'_i|}(e_i+1)=\sum_{i=1}^r (e_i+1)\deg(C'\cdot D_i)=\deg(C'\cdot G_Q)+ \deg(\Gamma')
$$
equations and $\deg(C'\cdot G_Q)=\deg(C'\cdot C') = 2 \Vol(Q)$ (see \cite{F:gnus} for instance).
\end{proof}

\vskip2mm
\noindent
\begin{rema}
Each linear equation in Proposition $2$ involves a reduced number of unknowns and the linear system $(S_{\gamma'})$ has a very particular sparse structure. For instance, letting $k=0$, we obtain for each $i=1,\ldots,r$ the linear subsystem
$$
\sum_{m\in Q^{(i)}\cap \zp^2}a_m [\phi_p(0)]^{\langle m,\eta_{i+1}\rangle+e_{i+1}}=0, \,\,\forall\,\,p\in |\Gamma'_i|.
$$
It has $Card (Q^{(i)}\cap \zp^2)-1$ equations with $Card(Q^{(i)}\cap\zp^2)$ unknowns and permits to determine the coefficients of the $i^{th}$ exterior facet polynomial of $q$. For $k=1$, we deduce relations on the coefficients $\{a_m,\,\,\langle m,\eta_i\rangle+e_i =1\}$. For a general $k$, 
we deduce relations on the coefficients $\{a_m,\,\,\langle m,\eta_i\rangle+e_i =k\}$. 
In general, there are no non trivial solutions to $(S_{\gamma'})$. The vector subspace of solutions has dimension $1$ precisely when $\gamma'$ obeys to the osculation conditions $(2)$ .

\end{rema}
\vskip1mm
\noindent
\subsection{A sparse vanishing-sums algorithm}

We describe here a sparse vanishing-sums algorithm associated to Theorem $2$ and Proposition $2$. When we say compute, we mean compute by using floatting calculous with a given precision. When we say test a vanishing-sum, we mean test an $\le \ep$-sum for an arbitrary small $\ep>0$. 
\vskip2mm
\noindent
\textit{Input:} A polynomial $f\in K[t_1,t_2]$ which satisfies $f(0,0)\ne 0$ and with square free exterior facet polynomials. 
\vskip1mm
\noindent
\textit{Output} : The irreducible factorization of $f$ over $\cp$.
\vskip2mm
\noindent
{\it Step 1. Compute the Minkowski summands of $N_f$.} Use for instance the algorithm presented in \cite{Gao2:gnus}. If $N_f$ is irreducible, so is $f$. Otherwise go to step $2$.
\vskip2mm
\noindent
{\it Step 2. Compute the fan of $X$.} The fan $\Sigma$ is obtained from $\Sigma_f$ by adding some rays in the singular two-dimensional cones of $\Sigma_f$. Such a fan can be obtained in a canonical way by using an Euclidean algorithm, or by computing some Hirzebruch-Jung continued fraction (see \cite{F:gnus}).
\vskip2mm
\noindent
{\it Step 3. Compute the osculating data $(D,\gamma)$.} By Lemma $3$, we have equality 
$$
D=\sum_{i=1}^r (k_i+1)D_i,\quad k_i:=-\min_{m\in N_f}\langle m,\eta_i\rangle.
$$
The Cartier divisor $\gamma$ on $D$ is obtained by computing the family of implicit functions 
$\{\phi_p, \,\,p\in|\Gamma_i|\}$ up to order $k_i$, for $i=1,\ldots,r$. Note that $\Gamma_i=0$ when $\rho_i$ is not an exterior ray of $N_f$.
\vskip2mm
\noindent
{\it Step 4. Compute the Newton polytopes of the absolute irreducible factors.} Proposition $1$ combined with Theorem $2$ gives an efficient way to compute the decompositions
$$
\gamma=\gamma_1+\cdots +\gamma_s,\qquad N_f=Q_1+\cdots +Q_s
$$ 
of $\gamma$ and $N_f$ associated to the irreducible absolute decomposition
$
f=q_1\cdots q_s
$ of $f$. 
\vskip2mm
\noindent
{\it Step 5. Compute the irreducible absolute factors of $f$.} Use Proposition $2$. The coefficients of the linear systems $(S_{\gamma_i})$, $i=1,\ldots,s$ are obtained from the residues $\langle \gamma,\Psi_m \rangle_p$, $m\in Q_i\cap (\np^*)^2$  already computed in step $4$.

\vskip4mm
\noindent

The numerical part of the algorithm reduces to the computation of the roots of the univariate exterior facet polynomials of $f$. Then it detects the absolute factorization of $f$ with a probability which increases after each positive vanishing-tests, up to obtain the adequat decomposition of $N_f$. Finally, we compute the factors by solving some linear systems. Roughly speaking, we obtain here a toric version of the Hensel lifting (see Remark $3$). A comparable algorithm has been obtained by Abu Salem-Gao-Lauderin in \cite{Gao:gnus}, by using combinatorial tools.

If we can decide formally if a sum vanishes, there is no chance of failure in step $4$ and the algorithm is deterministic. If we test the osculating criterions $(9)$ for a generic linear combination of the involved $m$'s, the Newton polytope decomposition is valid only with probability one, in the vain of the Galligo-Rupprecht and Elkadi-Galligo-Weimann algorithms \cite{GR:gnus} or \cite{EGW:gnus}. 

As in \cite{GR:gnus} and \cite{EGW:gnus}, our algorithm necessarily uses $\le \ep$-sum tests imposed by floatting calculous and numerical approximation. Thus, it can happen that the Newton decomposition of $N_f$ in step $4$ does not correspond to the absolute decomposition of $f$ (and that for any choice of $\ep>0$) and there is a chance of failure of the algorithm. Nevertheless, in the important case a polynomial $f$ defined and irreducible over $\qp$ the authors in \cite{CG:gnus} show that we can recover the exact factorization with formal coefficients in a finite extension of $K$ from a sufficiently fine approximate factorization. This problem will be explored in a further work.

\subsection{Comparison with related results}

We compare here our algorithm with that of Galligo-Rupprecht \cite{GR:gnus} and with that of Elkadi-Galligo-Weimann \cite{EGW:gnus}.

\subsubsection{Comparison with the Galligo-Rupprecht GR-algorithm} In \cite{GR:gnus}, the authors perform a generic change of affine coordinates and then compute the factorization of $f$
$$
f(t_1,t_2)=\prod_{i=1}^{d} (t_2-\phi_i(t_1))
$$
in $\cp\{t_1\}[t_2]$ modulo $(t_1^3)$. They detect the factors of $f$ with probability one by testing the Reiss relation $(3)$ on subfamilies $\mathcal{F}\subset \{\phi_i,\,i=1,\ldots,d\}$. Then they lift and compute the candidate factors by using Hensel lemma. Let us compare  with our approach.
\vskip2mm
\noindent
\textit{The recombination number.} We define the recombination number $\mathcal{N}(f)$  to be the maximal number of choices $\gamma'<\gamma$ necessary to detect the absolute factorization of $f$ in step $4$.
By $(12)$, it depends on the geometry of $N_f$ and is subject to constraints given by the possible Minkowski-sum decompositions of $N_f$. For instance, if $f$ is irreducible over $K$, its irreducible absolute factors necessarily have \textit{the same polytope} (see \cite{Ch:gnus} for instance) and $\mathcal{N}(f)$ decreases drastically. 

We denote by $\mathcal{M}(f)$ the recombination number of the GR-algorithm, that is the number of possible choices for the families $\mathcal{F}$ when taking in account restrictions imposed by the possible Minkowski-sums decompositions of $N_f$. 

We have the following improvement:
\vskip2mm
\noindent
\begin{lemm}  Suppose that $f$ is irreducible over $K$ and satisfies hypothesis $(H_1)$ and $(H_2)$. Let $d=\deg(f)$. Then
$$
\mathcal{N}(f)\le \mathcal{M}(f)\le 2^{d}
$$ 
with equivalence 
$$
\mathcal{N}(f)=\mathcal{M}(f)\iff N_f=\Conv\{(0,0),(d,0),(0,d)\},
$$
where $\Conv$ designs the convex hull.
\end{lemm}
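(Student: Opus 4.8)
The plan is to understand both recombination numbers combinatorially and then compare. First I would pin down $\mathcal{N}(f)$: by Theorem $2$, a candidate absolute factor of $f$ corresponds to the choice of a lattice Minkowski summand $Q$ of $N_f$ together with an effective $\gamma'\le\gamma$ satisfying the degree constraints $\deg(\gamma'\cdot D_i)=\Card(Q^{(i)}\cap\zp^2)-1$. Hence $\mathcal{N}(f)$ is bounded by the number of ways to split the reduced zero-cycle $\Gamma=C\cdot\partial X$ into a subcycle $\Gamma'$ respecting, facet by facet, the cardinalities prescribed by a valid summand $Q$ of $N_f$. On each exterior ray $\rho_i$, $\Gamma_i$ has $\deg(C\cdot D_i)=\Card(N_f^{(i)}\cap\zp^2)-1$ points (this is toric intersection theory, as used in the proof of Theorem $2$), and we must choose $\Card(Q^{(i)}\cap\zp^2)-1$ of them; so the count is a sum over valid summands $Q$ of $\prod_i\binom{\Card(N_f^{(i)}\cap\zp^2)-1}{\Card(Q^{(i)}\cap\zp^2)-1}$. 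On the other side, $\mathcal{M}(f)$ counts the subfamilies $\mathcal{F}\subset\{\phi_1,\dots,\phi_d\}$ whose cardinality is compatible with some Minkowski summand of $N_f$: after the generic change of coordinates the only constraint the GR-algorithm sees is $\Card(\mathcal{F})=\deg q$ for a possible factor $q$, i.e. $\Card(\mathcal{F})$ equals the lattice mixed-volume-type quantity $2\Vol(Q)+(\text{boundary terms})$ attached to a summand $Q$; so $\mathcal{M}(f)=\sum_{\deg q\ \text{admissible}}\binom{d}{\deg q}$.

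Next I would prove $\mathcal{N}(f)\le\mathcal{M}(f)$. The key point is that our refined constraints are at least as restrictive as GR's single global constraint: a choice of $\gamma'$ valid for $(D,Q)$ determines, by restriction to each $D_i$, a partition of the $\Gamma_i$'s, and summing the local cardinalities recovers exactly the global degree $\deg q$ that GR would require; conversely, many subfamilies $\mathcal{F}$ of the correct total size fail to respect the facet-by-facet distribution and are never produced by our algorithm. So there is an injection from the set of recombination choices in step $4$ into the set of GR-admissible subfamilies of the same total cardinality, compatibly with the summand $Q$ each is attached to; this gives the inequality. The bound $\mathcal{M}(f)\le 2^d$ is then immediate since $\sum_{j}\binom{d}{j}\le 2^d$, with the sum restricted to admissible degrees only tightening it.

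Finally I would analyze the equality case. Equality $\mathcal{N}(f)=\mathcal{M}(f)=2^d$ forces two things simultaneously: first, \emph{every} $j\in\{0,\dots,d\}$ must be an admissible degree of a Minkowski summand of $N_f$ (so that $\sum_j\binom dj=2^d$ is actually attained); second, the facet-refined count must coincide with the unrefined one, which happens only when the facet data impose no extra restriction, i.e. when on each ray all but one of the points of $\Gamma_i$ can be distributed freely — equivalently when $N_f$ has a single exterior facet carrying all the combinatorics. Both conditions together pin $N_f$ down: the only Newton polytope in the first quadrant, containing the origin, all of whose lattice segments $[0,j]$ arise as summands and which has exactly one exterior facet, is the standard triangle $\Conv\{(0,0),(d,0),(0,d)\}$ — the dense degree-$d$ case, where $X_f=\pp^2$, $\partial X$ is a single line, and Theorem $2$ literally reduces to the GR recombination of the $d$ roots $\phi_i$. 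Conversely, for this $N_f$ one checks directly that $X=\pp^2$, the two constructions coincide, and $\mathcal{N}(f)=\mathcal{M}(f)$. I expect the main obstacle to be the careful bookkeeping in the equality case: translating ``the facet-refined count equals the global count'' into the geometric statement ``$N_f$ is the standard simplex'' requires ruling out polytopes with several exterior facets whose binomial products happen to telescope to $2^d$, which I would handle by a strict-inequality argument — as soon as $N_f$ has two independent exterior facets, at least one product $\prod_i\binom{\cdot}{\cdot}$ is a proper subsum of the corresponding $\binom{d}{\deg q}$, making $\mathcal{N}(f)<\mathcal{M}(f)$.
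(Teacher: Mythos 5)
Your combinatorial setup is essentially the paper's: it also reduces the statement to comparing $\mathcal{N}(f)=\sum_{k\mid n}\prod_{i=1}^t\binom{nl_i}{kl_i}$ with $\mathcal{M}(f)=\sum_{k\mid n}\binom{nl}{kl}$, where $N_f=nQ_0$ with $Q_0$ primitive, $l_1,\dots,l_t$ are the lattice lengths of the exterior facets of $Q_0$, and $d=nl$; note that the indexing by divisors $k\mid n$ comes from Ch\`eze's result that irreducibility over $K$ forces every absolute factor to have polytope $kQ_0$, which you should invoke explicitly to justify your ``admissible degrees''. However, two of your steps fail as written. First, your injection argument for $\mathcal{N}(f)\le\mathcal{M}(f)$ rests on the claim that ``summing the local cardinalities recovers exactly the global degree $\deg q$''. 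That is false: $\sum_i\deg(\gamma'\cdot D_i)$ equals the exterior lattice perimeter $k(l_1+\cdots+l_t)$ of the summand, which is in general strictly smaller than $\deg q=kl$ (in the paper's Example 3 it is $2k$ versus $2ak$). So a valid $\gamma'$ and the corresponding GR-subfamily do not have the same cardinality, and there is no size-preserving injection. The paper instead proves the termwise inequality in two steps, $\prod_i\binom{nl_i}{kl_i}\le\binom{n(l_1+\cdots+l_t)}{k(l_1+\cdots+l_t)}$ (a Vandermonde-type inequality, strict as soon as $t\ge 2$) and $\binom{n(l_1+\cdots+l_t)}{k(l_1+\cdots+l_t)}\le\binom{nl}{kl}$ (using $n(l_1+\cdots+l_t)\le d$); it is precisely this second step that absorbs the cardinality mismatch you overlooked.

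Second, your equality analysis addresses the wrong equality: the lemma characterizes $\mathcal{N}(f)=\mathcal{M}(f)$, not $\mathcal{N}(f)=\mathcal{M}(f)=2^d$ (the paper observes that $\mathcal{M}(f)=2^d$ occurs only for a dense polynomial not assumed irreducible over $K$, so this case is vacuous here). The correct analysis requires both inequalities above to be equalities simultaneously: the first is an equality iff $t=1$, and the second iff $n(l_1+\cdots+l_t)=d$, i.e. iff the normal fan of $N_f$ is already regular. You identify the single-exterior-facet condition but not the second one, and without it you cannot exclude a polytope with a single exterior facet whose lattice length is strictly smaller than $d$. It is the conjunction of these two conditions, together with $0\in N_f$, that forces $N_f=\Conv\{(0,0),(d,0),(0,d)\}$.
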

\vskip2mm
\noindent

\begin{proof}
let $n$ be the biggest integer so that $N_f=nQ_0$ for a lattice polytope $Q_0$. Thus $d=nl$ where $l$ is the total degree of a polynomial with polytope $Q_0$. By \cite{Ch:gnus}, an irreducible absolute factor of $f$ has Newton polytope $kQ_0$ for some integer $k$ which divides $n$ and the GR-algorithm looks for an irreducible factor of degree $kl$. This gives the recombination number
$$
\mathcal{M}(f)=\sum_{k| n} C^{nl}_{kl},
$$
where $C^{i}_{j}$ is the usual number of combinations. We clearly have $\mathcal{M}(f)\le 2^{d}$. Equality $\mathcal{M}(f)=2^{d}$ holds if and only if $f$ is a dense polynomial which is not assumed to be irreducible over $K$. 

Denote now by $l_1,\ldots,l_t$ the lattice lenght of the exterior facets of $Q_0$. Then, restrictions $(12)$ imposed to the possible choice of $\gamma'\le \gamma$ induce equality
\begin{eqnarray*}
\mathcal{N}(f)=\sum_{k|n} \prod_{i=1}^t C^{nl_i}_{kl_i}.
\end{eqnarray*}
We have both inequalities 
\begin{eqnarray*}
\prod_{i=1}^t C^{nl_i}_{kl_i} \le  C^{n(l_1+\cdots+l_t)}_{k(l_1+\cdots+l_t)} 
\quad{\rm and}\quad C^{n(l_1+\cdots+l_t)}_{k(l_1+\cdots+l_t)} \le   C^{nl}_{kl}
\end{eqnarray*}
for any $k\le n$. First inequality is an equality if and only if there is only $t=1$ exterior facet, and the difference strictly increases with $t$. Second inequality follows from the fact that the sum 
$$
l_f=nl_1+\cdots+nl_t
$$
of the lattice lenght of the exterior facets of $N_f$ is smaller or equal to $d=\deg(f)$. Moreover, we can convince that there is equality $l_f=d$ if and only if the normal fan $\Sigma_f$ of $N_f$ is regular. This is of course exceptional, and $l_f<<d$ in general. Finally, we have $\mathcal{N}(f)\le \mathcal{M}(f)$ and equality holds if and only if $N_f$ is regular with one exterior facet.  Since $0\in N_f$ by assumption, this is equivalent to that 
$
N_f=\Conv\{(0,0),(d,0),(0,d)\}.
$
\end{proof}

Note that there are fast factorization algorithms over a number field $K$ (see \cite{BHKS:gnus}, \cite{Lecerf:gnus}). The following example illustrates Lemma $4$.
\vskip2mm
\noindent
{\it Example 3.}
Suppose that $f$ is irreducible over $K$ and that $N_f=nQ_0$, where $Q_0$ is the ``undivisible" lattice polytope  
$$
Q_0= \Conv\{(0,0),(a-1,0),(0,a-1),(a,a)\},
$$
with $a\ge 2$  and $n$ prime. The lattice lenghts of the $t=2$ exterior facets of $N_f$ are both equal to $n$. Since $n$ is prime, the underlying recombination number is equal to
$$
\mathcal{N}(f)= n^2.
$$
In particular, is does not depend on $a$. The GR-algorithm considers $f$ as a dense polynomial of degree $deg(f)=2na$ and looks for an irreducible absolute factor of degree $2a$. The induced recombination number $\mathcal{M}(f)$ is thus equal to
$$
\mathcal{M}(f)=C^{2an}_{2a}\simeq \frac{2a^{2a}}{(2a)!} n^{2a},
$$
and growths exponentially with $a$.

\vskip2mm
\noindent

The asymptotic estimation of $\mathcal{N}(f)$ for a ``generic polytope"\footnote{The genericity has to be defined relatively to some invariant, as the cardinality of interior lattice points, or the volume for instance.} $N_f$ is a difficult problem. It is related to the estimation of the number of exterior facets, and to the estimation of ``how singular"  is the normal fan. Geometrically, these numbers correspond to the Picard numbers of $X_f$ and of a minimal resolution $X$. 

Note that by Lemma $3$ and Theorem $2$, the maximal number of vanishing-sums to test is equal to 
$$
Card (N_f\cap (\np^*)^2)\times \mathcal{N}(f).
$$
If we only want to detect absolute factors with probability $1$, it's enough to test $(9)$ on a generic linear combination of the involved $m\in N_f\cap \zp^2$. In such a case, it's enough to test $\mathcal{N}(f)$ vanishing-sums. 
\vskip4mm
\noindent
\textit{The numerical part.} The numerical part of our algorithm reduces to the computation of the roots of the exterior facet univariate polynomials. It's faster to factorize $t$ univariate polynomials of degree $l_1,\ldots,l_t$ than an univariate polynomial of total degree $d\ge l_1+\cdots+l_t$. Thus, the computation of $C\cdot \partial X$ is faster than with a generic line as soon as $\mathcal{N}(f)<\mathcal{M}(f)$. 
\vskip4mm
\noindent
\textit{The lifting step.} 
Although we need to compute a reduced number implicit functions, we need \textit{in general} a bigger precision on the $\phi_p$'s than the  upper bound $\deg(f)$ precision required with the classical Hensel lifting. In Example $3$, we need for instance to compute the $\phi_p$'s up to order $na^2 >> 2na=\deg(f)$. Morally, the more the recombination number is reduced, the more the required precision on the $\phi_p$'s increases. Thus, we should be very carefull when comparing algorithmic complexity. Nevertheless, we gain on both sides in the important case of a polynomial of bidegree $(a,b)$. Formulas $(9)$ and $(10)$ shows that we need to compute $a$ and $b$ implicit functions up to respective maximal orders $b$ and $a$, while the GR-algorithm computes $a+b$ implicit functions up to a maximal order $a+b$.


\subsubsection{Comparison with the Elkadi-Galligo-Weimann EGW-algorithm}  
In \cite{EGW:gnus}, the authors develop the sketch of an algorithm with the same recombination number that here by using the  interpolation criterions obtained in \cite{W2:gnus}. Their approach necessites to compute numerically the intersection of $C\subset X$ with a \textit{generic} curve $L$ in a very ample linear system which is ``close enough" to $|\partial X|$. In other words, they pick a Newton polytope $P$ whose normal fan is that of $X$ and then solve a polynomial system $f=\ep p+1=0$, where $p$ has polytope $N_p=P$ and $\ep$ is a small positive real number. When $\ep$ goes to $0$, the roots of the system go to the boundary of $X$ and they deduce an asymptotic distribution of the zero-cycle $C\cdot L$ which traduces  the polytopal information. As in the GR-algorithm, the generic choice of $p$ in the EGW-algorithm permits to compute $\deg(C\cdot L)$ implicit functions only up to order $2$ in order to detect the absolute decomposition of $f$ with probability one. Roughly speaking, Theorem $2$ corresponds to the limit case $\ep=0$. A great advantage is that we avoid the delicate problem of the ``small enough $\ep$''  choice and of the asymptotic distribution lecture.  We compute only $\deg(C\cdot \partial X)<<\deg(C\cdot L)$ implicit functions but with a precision $>>2$ and we detect the absolute decomposition of $f$ deterministically.

\subsection{Using non toric information} Suppose that $C$ is reduced, but with singularities along the boundary of $X$ (so the exterior facet polynomials of $f$ have square absolute factors). Thus $\Gamma=C\cdot \partial X$ is non reduced and the computations of residues is much more delicate. In particular, we can not use formula $(10)$. 

Nevertheless, there exists in such a case a (non toric) smooth completion $\wt{X}$ of  $\cp^2$ obtained from $X$ by a serie of blow-ups, and so that the proper transform $\wt{C}\subset \wt{X}$ of $C$ has a transversal intersection with the boundary of $\wt{X}$. By choosing an effective divisor $\wt{D}$ supported on $\partial\wt{X}$ and with sufficiently big multiplicities, we can then use Theorem $1$ efficiently to decompose $\wt{C}$ and to recover the absolute factorization of $f$. The added exceptional divisors give new restrictions on the possible choices of $\wt{\gamma}'<\wt{\gamma}$ (where $\wt{\gamma}=\wt{C}_{|\wt{D}}$) and the presence of singularities of $C$ along $\partial X$  finally turns out to be an opportunity to reduce the recombination number. There remains to find the best choice for $\wt{D}$. This will be explored in a further work.

\subsection{Conclusion}
We propose a new algorithm which computes the absolute factorization of a bivariate polynomial by taking in account the geometry of the Newton polytope. For a sparse polynomial, this permits to reduce the recombination number when compared to the usual vanishing-sums algorithms. There remains to implement such an algorithm and to compute formally its complexity. What we'll remind here is the general idea that
\begin{center}
\textit{The more a curve is singular, the more it is easy to decompose.}
\end{center}
For a sparse polynomial $f$, a naive embedding of the curve of $f$ in $\pp^2$ produces many toric singularities on the line at infinity, and we have shown here that osculation criterions in an appropiate toric resolution $X$ permit to use this sparse information. Moreover, Theorem $1$ permits in theory to profit also of the \textit{non toric} singularities of $C$ on the boundary of $X$.



\begin{thebibliography}{99}

\bibitem{Gao:gnus} F. Abu Salem, S. Gao, A.G.B. Lauder, {\it Factoring polynomials via polytopes}, proc. of ISSAC (2004), pp. 411.
\bibitem{And:gnus} M. Andersson, \textit{Residue currents and ideal of holomorphic functions}, Bull. Sci. math. (2004), pp. 481-512.
\bibitem{AKS:gnus} M. Avendano, T. Krick, M. Sombra, \textit{Factoring bivariate sparse (lacunary) polynomials}, J. Complexity 23 (2007), pp. 193-216.
\bibitem{BHKS:gnus} K. Belabas, M. van Hoeij, J. Kluners, A. Steel, {\it Factoring polynomials over global fields}, Manuscrit (2004).
\bibitem{BHPV:gnus} W.P. Barth, K. Hulek, C.A.M. Peters, A. Van De Ven, {\it Compact Complex Surfaces}, Springer-Verlag, Second Edition (2004).
\bibitem{Lecerf3:gnus} A. Bostan, G. Lecerf, B. Salvy, E. Schost, B. Wiebelt, {\it Complexity issues in bivariate polynomial factorization}, 
proc. of ISSAC (2004), pp. 42-49.
\bibitem{CG:gnus} G. Ch\`eze, A. Galligo, \textit{From an approximate to an exact factorization}, J. Symbolic Computation, 41, no. 6 (2006), pp. 682-696.
\bibitem{Ch:gnus} G. Ch\`eze, \textit{Absolute polynomial factorization in two variables and the knapsack proble}, proc. of ISSAC (2004), pp. 87-94.
\bibitem{CL:gnus} G. Ch\`eze and G. Lecerf, \textit{Lifting and recombination techniques for absolute factorization}, J. of Complexity  23, no. 3 (2007), pp. 380-420.
\bibitem{Cox1:gnus} D. Cox, {\it The homogeneous coordinate ring of a toric variety}, 
J. Algebraic Geom. 4 , no. 1 (1995), pp. 17-50.
\bibitem{Danilov:gnus} V. Danilov, \textit{The geometry of toric varieties}, Russian Math. Surveys 33 (1978), pp. 97-154.
\bibitem{EGW:gnus} M. Elkadi, A. Galligo, M. Weimann, {\it Towards Toric Absolute Factorization}, J. Symb. Comp. (2009), doi:12.1216/j.jsc.2008.03.007. 
\bibitem{GR:gnus} A. Galligo, D. Rupprecht, \textit{Irreducible decomposition of curves}, J. Symb. Comp., 33 (2002), pp. 661-677.
\bibitem{F:gnus} W. Fulton, {\it Introduction to Toric Varieties}, Annals of Math. Studies, Princeton University Press (1993).
\bibitem{Gao1:gnus} S. Gao, {\it Factoring multivariate polynomials via partial differential equation}, Math. Comp. 72, no 242 (2003), pp. 801-822.
\bibitem{Gao2:gnus} S. Gao, A.G.B. Lauder, {\it Decomposition of polytopes and polynomials}, Disc. and Comp. Geom. 6, no1 (2001), pp. 89-124.
\bibitem{GG:gnus} J. von zur Gathen, J. Gerhard, \textit{Modern computer algebra}, Cambridge University Press, 1st edition (1999).
\bibitem{Green:gnus} M.L. Green, {\it Secant functions, the Reiss relation and its converse}, Trans. of Amer. Math. Soc. 280, no.2 (1983), pp. 499-507.
\bibitem{Grif3:gnus}  P.A. Griffiths, {\it Variations on a theorem of Abel}, Inventiones Math. 35 (1976), pp. 321-390.
\bibitem{Grif:gnus}  P.A. Griffiths, J. Harris, {\it Principles of Algebraic Geometry}, Pure and applied mathematics, 
Wiley-Intersciences (1978).
\bibitem{Grif2:gnus}  P.A. Griffiths, J. Harris, {\it Residues and zero-cycles on algebraic varieties}, Annals of Math. 128 (1978), pp. 461-505.
\bibitem{H:gnus} R. Hartshorne, {\it Algebraic Geometry}, Springer-Verlag (1977).
\bibitem{HP:gnus}  G. Henkin, M. Passare, Abelian differentials on singular varieties and variation on a theorem of Lie-Griffiths, 
Inventiones math. 135, 297-328, 1999.
\bibitem{Kho:gnus} A.G. Khovansky, \textit{Newton polyhedra and toric varieties}, Funct. Anal. Appl. 11
(1977), pp. 56-67.
\bibitem{Lecerf2:gnus} G. Lecerf, {\it Sharp precision in Hensel lifting for bivariate polynomial factorization}, Math. Comp. 75 (2006), pp. 921-933.
\bibitem{Lecerf:gnus} G. Lecerf, {\it Improved dense multivariate polynomial factorization algorithms}, J. Symb. Comp. (2007), to appear.
\bibitem{LLL:gnus} A.K. Lenstra, H.W. Lenstra, Jr. and L. Lov�z, {\it Factoring Polynomials with Rational Coefficients}, Math. Ann. 261, no.2 (1982), pp. 515-534.
\bibitem{O:gnus} A.M. Ostrowski, {\it On multiplication and factorization of polynomials. Lexicographic orderings and extreme aggregates of terms}, Aequationes Math. 13 (1975), pp. 201-228.
\bibitem{R:gnus} D. Rupprecht, \textit{Semi-numerical absolute factorization of polynomials with integer coefficients}, J. Symbolic Computation 37 (2004), pp.557-574.
\bibitem{T:gnus} A. Tsikh, \textit{Multidimensional residues and their applications}, Transl. Amer. Math. Soc 103, (1992).
\bibitem{W2:gnus} M. Weimann, {\it Trace et calcul r\'esiduel: nouvelle version du th\'eor\`eme d'Abel-inverse et formes ab\'eliennes}, Ann. de Toulouse, 16, no.2 (2007), pp. 397-424.
\bibitem{W:gnus} M. Weimann, {\it An interpolation theorem in toric varieties}, Ann. Inst. Fourier 58, no.4 (2008), pp. 1371-1381.    
\bibitem{Wood1:gnus} J.A. Wood, {\it A simple criterion for an analytic hypersurface to be algebraic}, Duke Math. J. 51 (1984), pp. 235-237.
\bibitem{Wood2:gnus}   J.A. Wood, {\it Osculation by Algebraic Hypersurfaces}, J. Differential Geometry 18 (1983), pp. 563-573. 



\end{thebibliography}
\end{document}